\newcommand{\old}[1]{}
\renewcommand{\emph}[1]{\textit{#1}}
\definecolor{brown}{cmyk}{0, 0.72, 1, 0.45}
\definecolor{grey}{gray}{0.5}
\newcounter{rot}
\newcommand{\ignore}[1]{}
\def\cA{{\mathcal A}}
\def\cB{{\mathcal B}}
\def\cC{{\mathcal C}}
\def\cU{{\mathcal U}}
\def\cX{{\mathcal X}}
\def\cV{{\mathcal V}}
\def\cY{{\mathcal Y}}
\newcommand{\set}[1]{\left\{#1\right\}}
\def\cQ{\mathcal{Q}}
\def\ii_(#1,#2){i_{#1}^{#2}}
\def\a{\alpha}
\def\b{\beta}
\def\d{\delta}
\def\D{\Delta}
\def\e{\varepsilon}
\def\f{\phi}
\def\F{\Phi}
\def\g{\gamma}
\def\G{\Gamma}
\def\k{\kappa}
\def\l{\lambda}
\def\m{\mu}
\def\n{\nu}
\def\r{\rho}
\def\s{\sigma}
\def\t{\tau}
\def\om{\omega}
\def\cW{{\mathcal W}}
\newcommand{\sbs}{\subset}
\def\cD{\mathcal{D}}
\def\cE{\mathcal{E}}
\def\cF{\mathcal{F}}
\def\cT{{\mathcal T}}
\def\cS{{\mathcal S}}
\def\cO{{\mathcal O}}
\def\Re{\mathbb{R}}
\def\Z{\mathbb{Z}}
\def\N{\mathbb{N}}
\newcommand{\brac}[1]{\left( #1 \right)}
\newcommand{\expect}{\operatorname{\bf E}}
\def\E{\expect}
\renewcommand{\Pr}{\operatorname{\bf Pr}}
\newcommand\bfrac[2]{\left(\frac{#1}{#2}\right)}
\newtheorem{theorem}{Theorem}[section]
\newtheorem{conjecture}[theorem]{Conjecture}
\newtheorem{lemma}[theorem]{Lemma}
\newtheorem{corollary}[theorem]{Corollary}
\theoremstyle{definition}
\newtheorem{q}{}
\newtheorem{observation}[theorem]{Observation}
\newcounter{thmtemp}
\newcommand{\nospace}[1]{}
\def\path{\operatorname{PATH}}
\def\V{{\bf Var}}
\newcommand{\beq}[1]{\begin{equation}\label{#1}}
\def\eeq{\end{equation}}
\renewcommand{\Re}{\mathbb{R}}
\def\La{\Lambda}
\newcommand{\de}[2]{||#1-#2||}
\newcommand{\dx}[2]{\mathrm{dist}(#1,#2)}
\newcommand{\diam}{\mathrm{diam}}
\newcommand{\inv}{\mathrm{inv}}
\newcommand\gxp{\cX_{n,p}}
\newcommand\gxpr{\cX_{n,p,r}}
\newcommand\gxnp{\cX_{\N,p}}
\newcommand{\gx}[1]{\cX_{n,#1}}
\newcommand\xdn{\cX_n}
\newcommand\gyp{\cY^d_{t,p}}
\newcommand\gwp{{\mathcal W}^d_{t,p}}
\newcommand\gw[3]{{\mathcal W}^{#1}_{#2,#3}}
\newcommand\gwpa{\mathcal{W}^{d,\a}_{t,p}}
\newcommand{\gy}[3]{\cY^{#1}_{#2,#3}}
\newcommand\gypa{\cY^{d,\a}_{t,p}}
\newcommand\gypta{\cY^{d,\a}_{\t,p}}
\newcommand\ydt{\cY^d_t}
\newcommand{\yd}[1]{\cY^d_{#1}}
\newcommand{\flr}[1]{\lfloor #1 \rfloor}
\newcommand{\interval}[2]{\left[#1,#2\right]}
\begin{document}
\title{Traveling in randomly embedded random graphs}

\author{Alan Frieze}
\email[Alan Frieze]{alan@random.math.cmu.edu}
\thanks{Research supported in part by NSF grant DMS-1362785}

\author{Wesley Pegden}
\email[Wesley Pegden]{wes@math.cmu.edu}
\thanks{Research supported in part by NSF grant DMS-1363136}

\date{November 24, 2014}

\address{Department of Mathematical Sciences\\
Carnegie Mellon University\\
Pittsburgh, PA 15213\\
U.S.A.}

\begin{abstract}
We consider the problem of traveling among random points in Euclidean space, when only a random fraction of the pairs are joined by traversable connections.  In particular, we show a threshold for a pair of points to be connected by a geodesic of length arbitrarily close to their Euclidean distance, and analyze the minimum length Traveling Salesperson Tour, extending the Beardwood-Halton-Hammersley theorem to this setting.
\end{abstract}

\maketitle

\section{Introduction}
The classical Beardwood-Halton-Hammersley theorem \cite{BHH} (see also Steele \cite{S}) concerns the minimum cost Traveling Salesperson Tour through $n$ random points in Euclidean space.  In particular, it guarantees the existence of an absolute (though still unknown) constant $\b_d$ such that if $x_1,x_2\dots,$ is a random sequence of points in the $d$-dimensional cube $[0,1]^d$, the length $T(\gx 1)$ of a minimum tour through $x_1,\dots,x_n$ satisfies
\begin{equation}
\label{e.bhh}
T(\gx 1)\sim \beta_d n^{\frac{d-1}{d}}\ a.s.
\end{equation}

The present paper is concerned still with the problem of traveling among random points in Euclidean space.  In our case, however, we suppose that only a (random) subset of the pairs of points are joined by traversable connections, independent of the geometry of the point set. 

In particular, we study random embeddings of the Erd\H{o}s-R\'enyi-Gilbert random graph $G_{n,p}$ into the $d$-dimensional cube $[0,1]^d$.  We let $\cX_n$ denote a random embedding of $[n]=\{1,\dots,n\}$ into $[0,1]^d$, where each vertex $i\in [n]$ is mapped (independently) to a random point $X_i\in [0,1]^d$, and we denote by $\gxp$ the random graph whose vertex set is $\cX_n$ and whose pairs of vertices are joined by edges each with independent probability $p$.  Edges are weighted by the Euclidean distance between their points, and we are interested in the total edge-weight required to travel about the graph.

This model has received much less attention than the standard model of a random geometric graph, defined as the intersection graph of unit balls with random centers $X_i,i\in[n]$, see Penrose \cite{P}. We are only aware of the papers by Mehrabian \cite{M11} and Mehrabian and Wormald \cite{MR} who studied the {\em stretch factor} of $\gxp$. In particular, let $\de x y$ denote the Euclidean distance between vertices $x,y$, and $\dx x y$ denote their distance in $\gxp$.
They showed (considering the case $d=2$) that unless $p$ is close to 1, the stretch factor
\[
\sup_{x,y\in \gxp} \frac{\dx x y}{\de x y}
\]
tends to $\infty$ with $n$.

As a counterpoint to this, our first result shows a very different phenomenon when we pay attention to additive rather than multiplicative errors.  In particular, for $p\gg \frac{\log^d n}{n}$, the distance between a typical pair of vertices is arbitrarily close to their Euclidean distance, while for $p\ll \frac{\log^d n}{n}$, the distance between a typical pair of vertices in $\xdn$ is arbitrarily large (Figure \ref{f.paths}). 
\begin{theorem}
\label{t.expected}  Let $\om=\omega(n)\to \infty$.  We have:
\begin{enumerate}[(a)]
\item For $p\leq \frac 1 {\om^d(\log\log n)^{2d}} \frac{\log^dn}{n}$ and fixed $u,v$, 
$$\dx u v\geq \frac{\om}{8de^d} 
\qquad\text{a.a.s.}\footnote{A sequence of events $\cE_n$ occurs {\em asymptotically almost surely} (a.a.s.) if $\lim_{n\to\infty}\Pr(\neg\cE_n)=0$.}$$
\item
For $p\geq \frac{\om\log^d n}{n}$, we have a.a.s. that uniformly for all vertices $u,v$,  
\[
\dx u v =\de u v+o(1).
\]
\end{enumerate}
\end{theorem}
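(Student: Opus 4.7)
The plan is a first-moment argument for Part (a) and a geometric path construction for Part (b).

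\emph{Part (a).} Set $L=\om/(8de^d)$ and let $N_k$ count the paths of $k$ edges from $u$ to $v$ in $\gxp$ whose total Euclidean weight is at most $L$. Since $\{\dx u v \leq L\}\subseteq \bigcup_k \{N_k\geq 1\}$, it suffices to show $\sum_k \expect[N_k]=o(1)$. Writing $\widehat V_k$ for the probability that an i.i.d.\ uniform sequence $W_0,\dots,W_k\in [0,1]^d$ satisfies $\sum_{i=0}^{k-1}|W_i-W_{i+1}|\leq L$, we have $\expect[N_k]\leq n^{k-1}p^k\widehat V_k$. Changing variables to the step vectors $z_i=W_{i+1}-W_i$ and passing to polar coordinates reduces the remaining integral to a Dirichlet integral, giving $\widehat V_k \leq (d!\,v_d L^d)^k/(dk)!$ where $v_d$ is the unit-ball volume in $\Re^d$. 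Setting $Y = np\cdot d!\,v_d L^d$, Stirling yields $\expect[N_k]\leq n^{-1}(Y e^d/(dk)^d)^k$. Substituting the hypotheses, the $\om^d$ factors cancel and $Y = \Theta(\log^d n/(\log\log n)^{2d})$, so $Y^{1/d} = o(\log n)$. The bound is unimodal in $k$, maximized at $k_\ast \asymp Y^{1/d}/d$ with maximum $n^{-1}e^{Y^{1/d}}=n^{-1+o(1)}$; a Gaussian-width approximation around $k_\ast$ keeps the whole sum at $n^{-1+o(1)}=o(1)$.

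\emph{Part (b).} The aim is, for every pair $u,v$ simultaneously, to find a polygonal path in $\gxp$ that tracks the Euclidean segment $uv$. Choose a scale $r = c_1(\log n/(np))^{1/d}$ for a large constant $c_1$, and a spacing $\delta$ with $r\ll \delta = o(1)$. The hypothesis $np\geq \om\log^d n$ ensures both $n v_d r^d$ and $np v_d r^d$ are $\gg \log n$, so a Chernoff bound plus a union bound over a polynomially fine net of $[0,1]^d$ yields a.a.s.\ the structural facts
\begin{itemize}
\item[(S1)] every ball $B(P,r)\subset [0,1]^d$ contains $\Theta(n v_d r^d)$ vertices of $\cX_n$;
\item[(S2)] every vertex $u\in \cX_n$ has $\Theta(np v_d r^d)=\Theta(\log n)$ neighbours in every ball $B(P,r)$ with $|u-P|\leq \delta+2r$.
\end{itemize}
Given $u,v$ with $\ell=\de u v\geq \delta$, place $P_0=u,P_1,\dots,P_k=v$ along the segment at spacing $\leq \delta$. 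I seek a path $x_0=u,x_1,\dots,x_k=v$ with $x_i\in B(P_i,r)$. A single-sided iteration of (S2) from $u$ extends the chain but does not guarantee termination at a neighbour of $v$; instead, I grow reachable sets $S_i\subseteq B(P_i,r)$ forward from $u$ and $T_i\subseteq B(P_{k-i},r)$ backward from $v$. By (S2) each set multiplies by $\Theta(\log n)$ per layer until its ball saturates, after which $S_i$ and $T_{k-i}$ are forced to meet. Decomposing $x_i-P_i=\alpha_i e_\|+y_i$ with $e_\|$ the direction of $uv$ and applying $\sqrt{a^2+b^2}\leq a+b^2/(2a)$, the parallel deviations telescope and
\[
\dx u v \leq \sum_i |x_i-x_{i+1}| \leq \ell+\frac{1}{2\delta}\sum_i|y_{i+1}-y_i|^2 \leq \ell + O\!\left(\frac{\ell r^2}{\delta^2}\right) = \ell+o(1).
\]
Short pairs $\ell<\delta$ are handled via a two-hop path through a common ball at the midpoint of $uv$, again using (S2).

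\emph{Main obstacle.} The principal technical step lies in Part (b): the two-sided saturation argument needs $\Theta(\log n/\log\log n)$ layers per side, and this must fit inside $k=\ell/\delta$ layers, bounding $\delta/r$ from above, while the length estimate $r^2/\delta^2 = o(1)$ bounds $\delta/r$ from below. Reconciling these in the slow-growth regime when $\om\to\infty$ slowly is the most delicate bookkeeping, and may call for a hierarchical or multi-scale construction rather than the single-scale grid described above. Part (a), by contrast, is routine once the Dirichlet integral bound on $\widehat V_k$ is in hand.
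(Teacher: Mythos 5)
Your Part (a) is correct and takes a genuinely different route from the paper. You bound $\E[N_k]$ directly via a Dirichlet-integral estimate on the step-volume $\widehat V_k$, giving $\sum_k \E[N_k]\leq p\sum_j Y^j/(dj)!\leq p\,e^{Y^{1/d}}=o(1)$. The paper instead splits the argument in two: a separate first-moment computation shows that a.a.s.\ the graph distance from $u$ to $v$ in $G_{n,p}$ is at least $k_0=\frac{\log n}{2d\log\log n}$, and a second first-moment computation shows that a.a.s.\ every $u$--$v$ path with at least $k_0$ edges has more than an $\e=1/\log\log n$ fraction of its edges longer than $\ell_1=\frac{\om(\log\log n)^2}{4e^d\log n}$, whence every path has length $\geq \e k_0\ell_1=\om/(8de^d)$. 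Your approach is arguably cleaner since it treats the total weight directly rather than engineering a ``fraction of long edges'' threshold, but both are first-moment calculations and yield the same conclusion.

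Part (b), however, has a real gap, and you have correctly identified it yourself. Your single-scale construction needs simultaneously (i) $r\ll\d$ so that the transverse corrections satisfy $\ell r^2/\d^2=o(1)$, and (ii) $\ell/\d$ at least on the order of the saturation time, which is $\Theta(\log(nv_dr^d)/\log(npv_dr^d))=\Theta(\log n/\log\log n)$ when $r=\Theta((\log n/(np))^{1/d})$. Combining these forces $\om\gg\log n/(\log\log n)^d$, whereas the theorem must hold for $\om\to\infty$ arbitrarily slowly, so the construction does not close in the regime that matters most. The paper sidesteps this with a different key idea: take a bounded number of balls of radius $\d=1/\log n$ along the segment; each contains $N\approx\nu_d n/\log^d n$ points, the induced subgraph has expected degree $Np=\Theta(\om)\to\infty$, so a giant component $X_B$ covers most of the ball, and Lemma~\ref{dimlem} gives $\diam(X_B)=O(\log N/\log Np)=O(\log n/\log\om)$. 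A path confined to $X_B$ then has total Euclidean length $O(\d\cdot\log n/\log\om)=O(1/\log\om)=o(1)$ for \emph{any} $\om\to\infty$. The construction is therefore $u\to X_B\to X_{B'}\to v$ with one large ball near each endpoint, plus a density argument showing $u$ (resp.\ $v$) has a neighbour in some $X_B$ and that any two $X_B,X_{B'}$ are joined by an edge. Replacing your multiplicative landmark chain with this ``hop onto a giant component of polylog diameter'' device is the missing ingredient; your hunch that a multi-scale construction might be needed is close in spirit, but the giant-component bound is what makes the bookkeeping work uniformly in $\om$.
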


\begin{figure}
\hspace{\stretch{1}}\includegraphics[width=.24\linewidth]{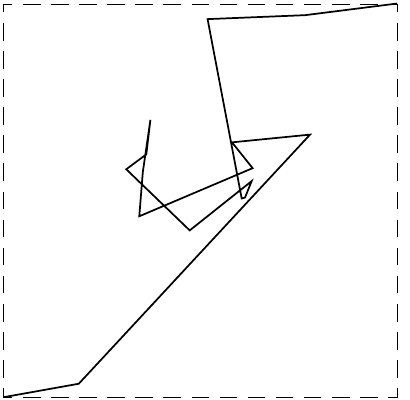}
\includegraphics[width=.24\linewidth]{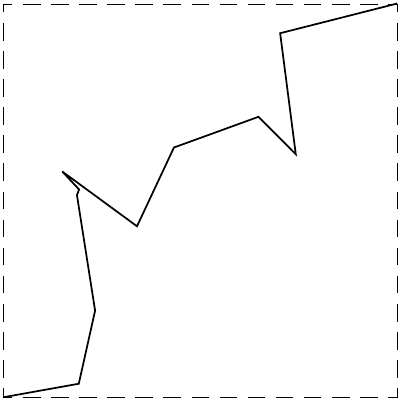}
\includegraphics[width=.24\linewidth]{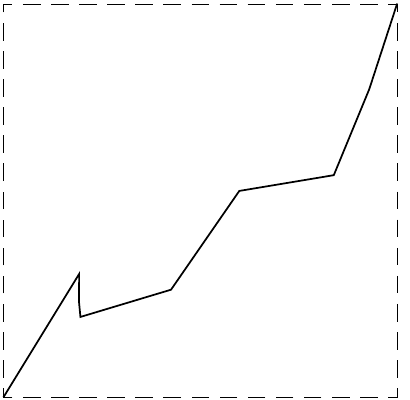}
\includegraphics[width=.24\linewidth]{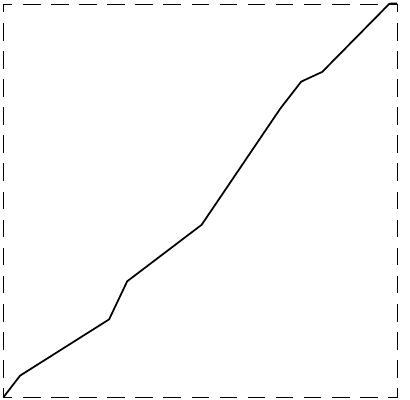}\hspace{\stretch{1}}
\caption{\label{f.paths}Paths in an instance of $\cX_{n,p}$ for $d=2$, $n=2^{30}$, and $p=\tfrac{10}{n},\tfrac{25}{n},\tfrac{50}{n},$ and $\tfrac{200}{n}$, respectively.  In each case, the path drawn is the shortest route between the vertices $x$ and $y$ which are closest to the SW and NE corners of the square. (See Q.~\ref{pathgeom}, Section \ref{Qs}.)}
\end{figure}

Theorem \ref{t.expected} means that, even for $p$ quite small, it is not that much more expensive to travel from one vertex of $\gxp$ to another than it is to travel directly between them in the plane.    On the other hand, there is a dramatic dependence on $p$ if the goal is to travel among \emph{all} points.  Let $T(\gxp)$ denote the length of a minimum length tour in $\gxp$ hitting every vertex exactly once, i.e. a Traveling Salesperson tour.  
\begin{theorem}\label{worst}
There exists a sufficiently large constant $K>0$ such that for all $p=p(n)$ such that $p\geq \frac{K\log n}{n}$, $d\geq 2$, we have that
\beq{Tgxp}
T(\gxp)=\Theta\bfrac{n^{\frac{d-1}{d}}}{p^{1/d}}\qquad a.a.s.
\eeq
\end{theorem}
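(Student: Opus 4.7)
My plan is to prove the upper and lower bounds in \eqref{Tgxp} by separate arguments, both controlled at the Euclidean length scale $(np)^{-1/d}$ (the typical distance from a vertex to its nearest $\gxp$-neighbor). For the upper bound, partition $[0,1]^d$ into a grid of $N = np/(K\log n)$ axis-aligned cubes of side $\ell = (K\log n/(np))^{1/d}$, with $K$ chosen large. A Chernoff bound shows that a.a.s.\ every cube contains $m = (1+o(1))K\log n/p$ vertices, and the restriction of $\gxp$ to any cube is distributed as $G(m,p)$ with $mp = K\log n \gg \log m$, well above the Hamiltonicity threshold (so in particular $T(\gxp)$ is a.a.s.\ finite). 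Within each cube $Q$ I will construct a Hamilton cycle greedily: starting at an arbitrary vertex, at each step move along the shortest $\gxp$-edge to a still-unvisited vertex. At step $i$ there are $m-i$ unvisited vertices uniformly distributed in the cube of side $\ell$, of which $\sim (m-i)p$ are $\gxp$-neighbors of the current vertex, so the nearest lies at Euclidean distance $O(\ell/((m-i)p)^{1/d})$. Summing gives
\[
\sum_{i=1}^{m}\frac{\ell}{(ip)^{1/d}} \;=\; \frac{\ell}{p^{1/d}}\sum_{i=1}^m i^{-1/d} \;=\; O\!\brac{\frac{\ell\, m^{(d-1)/d}}{p^{1/d}}}
\]
(using $d \geq 2$ to bound the harmonic-like sum by $\tfrac{d}{d-1}m^{(d-1)/d}$). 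The last $O(1/p)$ vertices, where greedy candidates thin, are spliced into the cycle via P\'osa rotations in the supercritical $\gxp[V(Q)]$, contributing only $O(\ell/p)$ per cube. Joining the per-cube cycles along a snake traversal of the grid adds $O(N\ell) = O((np/\log n)^{(d-1)/d})$, which is negligible. A direct computation confirms that $N \cdot O(\ell m^{(d-1)/d}/p^{1/d}) = O(n^{(d-1)/d}/p^{1/d})$, giving the upper bound.

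For the lower bound, let $T$ be any Hamilton cycle of $\gxp$, and for each vertex $v$ let $e(v)$ denote the length of the shortest $\gxp$-edge incident to $v$. Both tour edges at $v$ are $\gxp$-edges of length $\geq e(v)$, and double-counting endpoints yields $|T| \geq \sum_v e(v)$. For fixed $r = o(1)$ the number of other vertices within Euclidean distance $r$ of $v$ is distributed as $\Bin(n-1,q)$ with $q = \Th(r^d)$, and conditioned on positions, $\Pr[e(v)>r] = (1-p)^{\text{(that count)}}$. Taking expectation, $\Pr[e(v) > c(np)^{-1/d}]$ is bounded below by a positive constant for $c$ small. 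A second-moment estimate, using the near-independence of $\{e(u) \geq r\}$ and $\{e(v) \geq r\}$ when $\de u v \gg r$, then shows that a.a.s.\ a positive fraction of vertices satisfy $e(v) \geq c(np)^{-1/d}$, whence $|T| \geq \Omega(n)\cdot(np)^{-1/d} = \Omega(n^{(d-1)/d}/p^{1/d})$.

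The main obstacle will be making the within-cube greedy analysis fully rigorous. Simultaneously across all $N$ cubes and over every possible prefix history of the walk, one needs a high-probability bound on the distance to the nearest unvisited $\gxp$-neighbor at every step, together with a clean treatment of the residual $O(1/p)$ vertices handled by rotations. In effect this is a BHH-type statement for the sparse random subgraph $G(m,p)$ of the geometric complete graph on $m$ uniform points in $[0,\ell]^d$, which does not follow from classical BHH and demands a dedicated argument combining local geometric concentration (number of points in small balls, independence of edge- and position-randomness in $\gxp$) with Hamiltonicity theory of $G(m,p)$ just above the $\log m$ threshold. The lower bound, by contrast, reduces to a fairly standard local nearest-neighbor computation.
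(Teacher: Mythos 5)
Your lower bound is essentially the paper's: both reduce to showing that a constant fraction of vertices have no $\gxp$-neighbor within distance $\Theta((np)^{-1/d})$, then concentrate (you via second moment, the paper via Azuma--Hoeffding). That part is fine.

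The upper bound, however, has a genuine gap that you yourself flag but do not close, and the gap is not a routine technicality. The core of your construction is a greedy nearest-$\gxp$-neighbor walk inside each cube of side $\ell$ containing $m\approx K\log n/p$ points, with the claimed per-step bound $O\!\left(\ell/((m-i)p)^{1/d}\right)$. This bound is only an \emph{expectation} under the false assumption that the $m-i$ unvisited points remain i.i.d.\ uniform and their edges to the current vertex unexposed. Neither holds: the event that a vertex $u$ is still unvisited at step $i$ conditions jointly on its position (it was never the nearest $\gxp$-neighbor of any of $v_1,\dots,v_i$) and on edge outcomes, and this conditioning pushes the remaining points \emph{away} from the walk. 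Greedy nearest-neighbor for Euclidean TSP has worst-case ratio $\Theta(\log n)$ precisely because the walk can corner itself and be forced into long jumps; the crude bound $m\ell\sqrt d$ per cube is off by a factor $(\log n)^{1/d}$ from what you need, so the amortization in $\sum_i i^{-1/d}$ is not optional, and you would need a uniform high-probability version of it over all $N$ cubes and all step prefixes. You call this ``a BHH-type statement for $G(m,p)$ on $m$ uniform points'' needing ``a dedicated argument'' --- that dedicated argument is exactly the missing content, and it is the hard part of the theorem. The residual step (splicing the last $O(1/p)$ vertices by P\'osa rotations using whatever edge randomness remains unexposed after greedy) is also asserted rather than argued.

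For comparison, the paper's route (Section~\ref{TSPworst}) is structurally different and avoids any greedy analysis at the $\log m$-threshold scale. It first proves the BHH-type Theorem~\ref{gtsp} for \emph{constant} $p$ (where one can afford P\'osa-style patching and subadditivity), and then handles $p=p(n)\to 0$ by splitting $\gxp$ into independent layers $G_0,G_1,\dots$ with geometrically decreasing densities, using each layer to build a long cycle $H_i$ through the ``heavy'' cubes of a progressively coarser partition (the cubes are chosen so that whether a cube participates is a constant-probability event, letting Theorem~\ref{gtsp} drive the patching), and only at the very end using a P\'osa rotation-extension to absorb the $o(t^d)$ leftover vertices. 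The virtue of that route is that the hard probabilistic machinery is only ever invoked at constant edge-density, where it is robust; the cost is the multi-scale bookkeeping. Your approach, if it could be completed, would be more direct, but as written it assumes the key estimate rather than proving it.
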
 
(Recall that $f(n)=\Theta(g(n))$ means that $f(n)$ is bounded between positive constant multiples of $g(n)$ for sufficiently large $n$.)  As the threshold for $G_{n,p}$ to be Hamiltonian is at $p=\frac{\log n +\log \log n+\om(n)}{n}$, this theorem covers nearly the entire range for $p$ for which a TSP tour exists a.a.s.

  Finally, we extend the asymptotically tight BHH theorem to the case of $\gxp$ for any constant $p$.  To formulate an ``almost surely'' statement, we let $\gxnp$ denote a random graph on a random embedding of $\N$ into $[0,1]^d$, where each pair $\{i,j\}$ is present as an edge with independent probability $p$, and consider $\gxp$ as the restriction of $\gxnp$ to the first $n$ vertices $\{1,\dots,n\}$.
\begin{theorem}\label{tsp}
If $d\geq2$ and $p>0$ is constant, then there exists $\b^d_p>0$ such that
\[
T(\gxp)\sim \b^d_pn^{\frac{d-1} d} \qquad a.s.
\]
\end{theorem}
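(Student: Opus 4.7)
The argument adapts the classical Beardwood--Halton--Hammersley proof \cite{BHH} (see also Steele \cite{S}) to the random-graph setting. The plan is first to show that $\gamma_p(n) := \mathbb{E}[T(\gxp)]/n^{(d-1)/d}$ converges to a positive constant $\beta_p^d$, and then to upgrade this to almost-sure convergence via concentration applied in the coupled model $\gxnp$.

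\emph{Step 1 (subadditivity and existence of $\beta_p^d$).} Fix an integer $m \geq 1$ and partition $[0,1]^d$ into $m^d$ axis-aligned sub-cubes $C_1,\ldots,C_{m^d}$ of side $1/m$. Let $N_j$ denote the number of vertices in $C_j$; conditional on the $N_j$'s, the vertices in $C_j$ are i.i.d.\ uniform on $C_j$, and after rescaling $C_j$ to $[0,1]^d$ the induced subgraph of $\gxp$ is distributed as $\cX_{N_j,p}$. For constant $p$ and $n\to\infty$ with $m$ fixed, $N_j\sim n/m^d$ whp, so each $\cX_{N_j,p}$ is Hamiltonian whp and admits an optimal sub-tour $H_j$. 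Between any two adjacent sub-cubes there are $\Omega((n/m^d)^2 p)$ edges of $\gxp$, so a standard splicing argument combines the $H_j$ into a single Hamilton cycle on all $n$ vertices via $O(m^d)$ local modifications, each adding or replacing edges of length $O(1/m)$, for a total additional cost of $O(m^{d-1})$. Taking expectations (using concentration of $N_j$ and the a priori bound $\phi_p(n) := \mathbb{E}[T(\gxp)] = \Theta(n^{(d-1)/d})$ from Theorem~\ref{worst}) gives
\begin{equation*}
\phi_p(n) \;\leq\; m^{d-1}\,\phi_p(\lceil n/m^d\rceil)(1+o(1)) + O(m^{d-1}).
\end{equation*}
Dividing by $n^{(d-1)/d}$ and letting $n\to\infty$ first, then $m\to\infty$, a Fekete-style argument (as in BHH) yields $\gamma_p(n)\to\beta_p^d$ for some $\beta_p^d\geq 0$.

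\emph{Step 2 (positivity).} Any Hamilton cycle in $\gxp$ is a Hamilton cycle on $\{X_1,\ldots,X_n\}$ in the complete Euclidean graph, so $T(\gxp)\geq T^{\mathrm{Euc}}(X_1,\ldots,X_n)$, the classical Euclidean TSP. By BHH, $T^{\mathrm{Euc}}(X_1,\ldots,X_n)\sim\beta_d n^{(d-1)/d}$ a.s., so $\beta_p^d\geq\beta_d>0$.

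\emph{Step 3 (concentration and a.s.\ convergence).} Consider the vertex-exposure Doob martingale obtained by revealing the pairs $(X_i,(e_{ij})_{j<i})$ for $i=1,\ldots,n$. Changing one such pair alters $T(\gxp)$ by at most the cost of re-inserting a single vertex into the tour; using Theorem~\ref{t.expected}(b) to identify nearest-neighbor graph distances with Euclidean nearest-neighbor distances up to $o(1)$, this cost is $O(n^{-1/d}\log n)$ whp. An Azuma--Hoeffding (or Talagrand-type) inequality then yields $|T(\gxp)-\phi_p(n)|=o(n^{(d-1)/d})$ with summable failure probability, and Borel--Cantelli applied in $\gxnp$ delivers the a.s.\ statement.

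\emph{Main obstacle.} The most delicate ingredient is the concentration bound in low dimension, especially $d=2$, where the naive $O(1)$ bound on martingale differences gives only $\sqrt n$-order fluctuations, matching $\phi_p(n)$ itself. The sharper $O(n^{-1/d}\log n)$ step bound, enabled by Theorem~\ref{t.expected}(b), is therefore essential. Some care is also required in the splicing step of Step~1 to ensure the bridging edges and swap-edges can be chosen from $\gxp$, but the $\Omega((n/m^d)^2p)$ available bridge edges for constant $p$ make this essentially automatic.
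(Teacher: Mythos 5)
Your Steps 1 and 2 are broadly in the spirit of the classical BHH/Steele subadditivity argument (and your positivity argument via $T(\gxp)\geq T^{\mathrm{Euc}}$ is actually cleaner than the paper's for this particular case), but Step 3 has a genuine gap that the paper's proof is designed to avoid.

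The difficulty is in the concentration argument. You propose a vertex-exposure Doob martingale and claim that changing one coordinate $(X_i,(e_{ij})_{j<i})$ shifts the conditional expectation by $O(n^{-1/d}\log n)$ ``whp.'' Azuma--Hoeffding (and McDiarmid's bounded-differences inequality) require an \emph{almost sure} bound $c_i$ on the martingale differences, holding over all realizations of the remaining randomness; a ``whp'' bound cannot be inserted into the exponent. Moreover, the bound you claim is not a bound on the martingale difference at all: it is a bound on the cost of \emph{re-inserting one vertex into a given tour}. The martingale difference is the change in $\E[T\mid\cF_i]$ when the $i$-th coordinate is resampled, and resampling the edges incident to $i$ can, in principle, restructure the optimal tour globally; there is no a priori $o(1)$ bound on this quantity, and Theorem~\ref{t.expected}(b) (a statement about pairwise graph-vs-Euclidean distances) does not supply one. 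This is exactly the obstacle you flag in your ``Main obstacle'' paragraph for $d=2$, but the sharper step bound you invoke to resolve it is not established, and even for the classical Euclidean TSP the sharp concentration of Rhee--Talagrand is not obtained by a simple Azuma argument. The paper circumvents all of this: it Poissonizes (working with $\cW^d_{t,p}$), derives a self-improving recursion for $\mathbf{Var}(T)$ along dyadic scales $t=2^k t_0$ from the splicing inequality (Lemma~\ref{paste}), shows $\sum_k \mathbf{Var}(T(2^k t))/(2^k t)^{2d}<\infty$, applies Chebyshev and Borel--Cantelli to get a.s.\ convergence along dyadic scales, and then interpolates to all $n$ via a continuity lemma (Lemma~\ref{sandwich}) together with the renewal theorem for the Poisson counting process. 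Your Step 1 recursion also glosses over the fluctuations in the bin counts $N_j$ (the paper handles this by working in the Poisson/near-cube setting so that the sub-pieces are genuinely i.i.d.\ and the recursion is exact in law), but that is a repairable bookkeeping issue; the concentration step is the real gap.
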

Karp's algorithm \cite{K} for a finding an approximate tour through $\cX_n$ extends to the case $\gxp$, $p$ constant as well:
\begin{theorem}\label{t.alg}
For fixed $d\geq2$ and $p$ constant, then there is an algorithm that a.s. finds a tour in $\gxp$ of value $(1+o(1))\b^d_pn^{(d-1)/d}$ in polynomial time, for all $n\in \N$.
\end{theorem}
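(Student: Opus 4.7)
The plan is to mimic Karp's partitioning-and-patching algorithm \cite{K}, adapted to the graph metric of $\gxp$. Fix $k = C\log n$ for a sufficiently large constant $C = C(p)$, partition $[0,1]^d$ into $M = \lceil n/k \rceil$ axis-aligned subcubes $Q_1,\dots,Q_M$ of common side length $L = (k/n)^{1/d}$, and let $m_i = |\cX_n \cap Q_i|$. A Chernoff bound combined with a union bound shows that $m_i = (1+o(1))k$ uniformly in $i$, a.a.s. Within each $Q_i$, run the Held--Karp dynamic programming algorithm on the induced subgraph $\gxp[Q_i]$, treating missing edges as infinite weight; this finds a minimum-weight Hamilton cycle in time $O(2^{m_i} m_i^2) = n^{O(C)}$ per cell, polynomial overall. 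Since $p$ is constant and $m_i = \Theta(\log n)$, the induced graph $\gxp[Q_i]$ fails to be Hamiltonian with probability $O(k(1-p)^k) = n^{-\Omega(1)}$ provided $C$ is chosen large enough; union-bounding over cells, the DP succeeds simultaneously in every cell a.a.s.

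To patch, enumerate the cells along a traversal that visits consecutive cells through shared faces. For each consecutive pair $(Q_i, Q_{i+1})$ there are $\Theta(k^2)$ candidate edges of $\gxp$ between them, each present independently with probability $p$, so a crossing edge exists with probability $1 - (1-p)^{\Theta(k^2)} = 1 - n^{-\omega(1)}$; a union bound over the $M$ patches is trivial. Rewiring the two cell tours across such an edge costs $O(L)$ per patch, for a total patching cost of $O(ML) = O(n^{(d-1)/d}k^{-(d-1)/d}) = o(n^{(d-1)/d})$.

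For the main estimate on in-cell tour lengths, apply Theorem \ref{tsp} to each $Q_i$ (after rescaling by $L^{-1}$) to conclude that the minimum Hamilton cycle of $\gxp[Q_i]$ has length $(1+o(1))\beta^d_p m_i^{(d-1)/d} L$, uniformly in $i$. Writing $m_i = k + \delta_i$ and Taylor-expanding, the linear term vanishes because $\sum_i \delta_i = 0$, while the quadratic term contributes a relative correction of order $\tfrac{1}{Mk^2}\sum_i \delta_i^2 = O(1/k) = o(1)$, since $\E[\sum_i \delta_i^2] = \Theta(n)$. Hence
\[
\sum_{i=1}^M T(\gxp[Q_i]) \;=\; (1+o(1))\,\beta^d_p L \cdot M k^{(d-1)/d} \;=\; (1+o(1))\,\beta^d_p n^{(d-1)/d}.
\]
Combining with the $o(n^{(d-1)/d})$ patching cost and the matching lower bound from Theorem \ref{tsp}, the algorithm produces a tour of length $(1+o(1))\beta^d_p n^{(d-1)/d}$; upgrading from a.a.s.\ to the almost-sure statement in $\gxnp$ follows from Borel--Cantelli once summable tails are obtained.

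The main obstacle is the \emph{uniform} concentration, over all $M \approx n/\log n$ cells simultaneously, of $T(\gxp[Q_i])$ around its asymptotic value $\beta^d_p m_i^{(d-1)/d} L$. Theorem \ref{tsp} supplies this for a single cube but does not by itself yield tails strong enough to union-bound over $M = n^{\Omega(1)}$ cells. The natural tool is a bounded-differences (Azuma--Hoeffding) argument applied to the martingale exposing vertex positions and edges within each cell, sharp enough to beat an $M^{-1}$ factor; choosing $C$ suitably large provides the slack to do this while keeping the DP polynomial.
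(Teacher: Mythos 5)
Your algorithm is the same one the paper uses (Karp-style partition into $\Theta(\log n)$-point subcubes, Held--Karp DP within cells, then patch), and the cost accounting for Hamiltonicity of cells, DP running time, and patching overhead all match. But your analysis of the resulting tour length takes a route that you yourself flag as incomplete, and the gap you identify is real \emph{for your route} --- yet it is not actually an obstacle in the paper, because the paper never tries to concentrate $T(\gxp[Q_i])$ around $\beta^d_p m_i^{(d-1)/d}L$ cell by cell.

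The paper's point is simpler: you do not need each cell's tour to be close to its asymptotic value, only the \emph{sum} $\sum_i T(\gxp[Q_i])$ to concentrate. Because the cells are (essentially) independent, this sum is a sum of independent bounded random variables, and Hoeffding's inequality concentrates it around $m^d\cdot\E T(\text{cell})$ with no union bound over cells at all. This is exactly the mechanism in the proof of Theorem~\ref{gtsp}: the quantity $\cS_\g(\gyp)=\sum_\a\min\{T(\gypa),\,2dt^{(1-\g)(d+\e)}\}$ concentrates by Hoeffding, and the single-cell \emph{expectation} $\E T(\text{cell})\sim\b_p u^d$ is supplied by Lemma~\ref{l.Fasym}, which is a statement about $\F^d_p(u)=\E T$, not a high-probability statement per cell. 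The sentence ``The cost of the tour is bounded q.s.~as in Lemma~\ref{paste}'' is shorthand for reusing that machinery. So the bounded-differences martingale you suggest as a fix is unnecessary (and for $d=2$ it is not even clear your proposed per-cell bound would beat the $M^{-1}$ target). Your Taylor expansion of $m_i^{(d-1)/d}$ around $k$ is likewise a step you do not need once you work with $\E T(\text{cell})$ and Hoeffding directly; the paper controls fluctuations in the cell occupancies simply by noting $m_i=K\log n\pm\log n$ uniformly a.a.s.\ and folding that into the expectation and the boundedness of the cell contributions. In short: right algorithm, right identification of where the difficulty lies, but the paper's resolution (concentrate the sum, not the terms) is both simpler and avoids the issue you could not close.
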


\section{Traveling between pairs}
In this section, we prove Theorem \ref{t.expected}.  Let $\nu_d$ denote the volume of a $d$-dimensional unit ball; recall that $\nu_d$ is bounded $(\nu_d\leq \nu_5<6$ for all $d$).
\begin{proof}[Proof of Theorem \ref{t.expected}(a)]
Let 
$\e=\frac{1}{\log\log n}$ and let $\cA_k$ be the event that there exists a path of length $k\geq k_0=\frac{\log n}{2d\log\log n}$ from $u$ to $v$ that uses $\leq \e k$ edges of length at least $\ell_1= \frac{\om(\log\log n)^2}{4e^d\log n}$. Then
\begin{align}
\Pr(\exists k:\cA_k)&\leq \sum_{k\geq k_0}(k-1)!\binom{n}{k-1}p^k \binom{k}{(1-\e)k}\brac{\nu_d\bfrac{\om(\log\log n)^2}{4e^d\log n}^d}^{(1-\e)k}\label{eq1}\\
&\leq  \frac{1}{n}\sum_{k\geq k_0}\brac{\frac{\nu_d\log^{d\e}n}{(4e^d)^{d(1-\e)}}\cdot \bfrac{e}{\e}^\e}^k =o(1).\nonumber
\end{align}
{\bf Explanation of \eqref{eq1}:}
Choose the $k-1$ interior vertices of the possible path and order them in $(k-1)!\binom{n}{k-1}$ ways as $(u_1,u_2,\ldots,u_{k-1})$. Then $p^k$ is the probability that the edges exist in $G_{n,p}$. Now choose the short edges $e_i=(u_{i-1},u_i),i\in I$ in $\binom{k}{(1-\e)k}$ ways and bound the probability that these edges are short by $\brac{\nu_d\bfrac{\om(\log\log n)^2}{4e^d\log n}^d}^{(1-\e)k}$ viz.~the probability that $u_i$ is mapped to the ball of radius $\ell_1$, center $u_{i-1}$ for $i\in I$. 

Now a.a.s. the shortest path in $G_{n,p}$ from $u$ to $v$ requires at least $k_0$ edges: Indeed the expected number of paths of length at most $k_0$ from $u$ to $v$ can be bounded by
$$\sum_{k=1}^{k_0}(k-1)!\binom{n}{k-1}p^k\leq \frac{1}{n}\sum_{k=1}^{k_0}\bfrac{\log^dn}{\om^d(\log\log n)^{2d}}^k=o(1).$$
So a.a.s.
$$dist(u,v)\geq \e k_0\ell_1=\frac{\e\log n}{2d\log\log n}\cdot \frac{\om(\log\log n)^2}{4e^d\log n}=\frac{\om}{8de^d}.$$
\end{proof}

\begin{proof}[Proof of Theorem \ref{t.expected}(b)]
Fix some small $\g>0$.  We begin by considering the case of vertices $u,v$ at distance $\de u v\geq \g$.  Letting $\d=\frac{1}{\log n}$, there is a constant $C$ such that, for sufficiently large $n$ relative to $\g$, we can find a set $\cB$ of $\geq \frac{2C}{\d}$ disjoint balls of radius $\d$ centered on the line from $u$ to $v$, such that $\frac{C}{\d}$ of the balls are closer to $u$ than $v$, and $\frac{C}{\d}$ balls are closer to $v$ than $u$  (Figure \ref{f.path}). Denote these two families of $\frac C {\d}$ balls by $\cF_{u,v}$ and $\cF_{v,u}$.

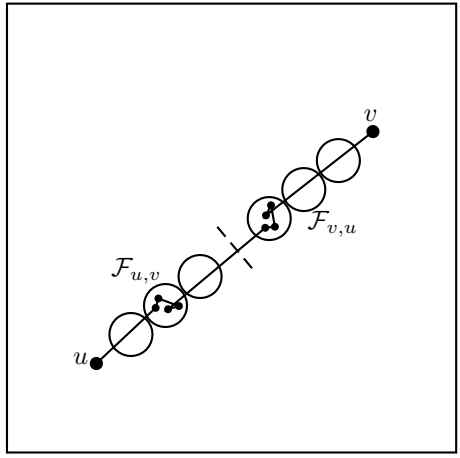
\begin{figure}[t]
\begin{center}
\begin{pdfpic}
\psset{unit=6cm,dotsize=5pt}
\begin{pspicture}(0,0)(1,1)
\psframe(0,0)(1,1)

\rput{40}(.2,.2){
\rput{-40}(-.02,.03){$u$}
\psdot(0,0)
\rput{-40}(.82,.03){$v$}
\psdot(.8,0)

\rput{-40}(.2,.1){$\mathcal{F}_{u,v}$}
\rput{-40}(.6,-.1){$\mathcal{F}_{v,u}$}

\pscircle(.1,0){.05}
\pscircle(.2,0){.05}
\pscircle(.3,0){.05}

\pscircle(.5,0){.05}
\pscircle(.6,0){.05}
\pscircle(.7,0){.05}

\psline[linestyle=dashed](.4,.06)(.4,-.06)

{\psset{dotsize=3pt}
\psdot(.18,.01)
\psdot(.198,.022)
\psdot(.222,-.02)
\psdot(.199,-.01)

\psdot(.48,-.01)
\psdot(.498,-.022)
\psdot(.522,.02)
\psdot(.499,.01)
}

\psline(0,0)(.18,.01)(.198,.022)(.222,-.02)(.199,-.01)(.48,-.01)(.498,-.022)(.522,.02)(.499,.01)(.8,0)
}
\end{pspicture}
\end{pdfpic}
\end{center}
\caption{\label{f.path}Finding a short path.}
\end{figure}

 Given a ball $B\in \cF_{\{u,v\}}=\cF_{u,v}\cup \cF_{v,u}$, the induced subgraph $G_B$ on vertices of $\cX$ lying in $B$ is a copy of $G_{N,p}$, where $N=N(B)$ is the number of vertices lying in $B$. Let 
$$\cS_B\text{ be the event that }N(B)\notin \left[\frac{N_0}{2},2N_0\right]\text{ where }N_0=\nu_d\d^d n.$$ 
The Chernoff bounds imply that for $B\in \cF_{\{u,v\}}$,
\beq{f0}
\Pr\brac{\neg\cS_B}\leq e^{-\Omega(n\d^d)}=e^{-n^{1-o(1)}}.
\eeq 
This gives us that a.a.s. $\cS_B$ holds for all pairs $u,v\in \cX$ and all $\cB$:

\begin{enumerate}[(A)]
\item All subgraphs $G_B$ for $B\in \cF_{\{u,v\}}$ have a giant component $X_B$, containing at least $N_0/3$ vertices.\\ 
Indeed, the expected average degree in $G_B$ is $Np=\Omega(\om)\to \infty$ and at this value the giant component is almost all of $B$ a.a.s.  In particular, since $\neg\cS_B$ holds, that 
\[
\Pr(\exists B:|X_B|\leq N_0/3)\leq ne^{-\Omega(N_0)}\leq ne^{-\Omega(\d^dn)}=o(1).
\]

\item \label{p.between} There is an edge between $X_B$ and $X_{B'}$ for all $B,B'\in \cF_{\{u,v\}}.$ \\
Indeed, the probability that there is no edge between $X_B,X_{B'}$, given (A), is at most\\ 
\[
(1-p)^{N_0^2/9}\leq e^{-\Omega(\d^{2d}n^2p)}\leq e^{-n^{1-o(1)}}.
\]
This can be inflated by $n^2\cdot (C\log n)^2$ to account for all pairs $u,v$ and all pairs $B,B'$.
\item \label{p.xdiam} For each $B\in \cF_{\{u,v\}}$, the graph diameter $\diam(X_B)$ (the maximum number of edges in any shortest path in $X_B$ satisfies
\[
\Pr\brac{\diam(X_B)>\frac{100\log N}{\log Np}}\leq n^{-3}.
\]
This can be inflated by $n^2\cdot (2C\log n)$ to account for pairs $u,v$ and the choice of $B\in \cF_{\{u,v\}}$. Fernholz and Ramachandran \cite{FR} and Riordan and Wormald \cite{RW} gave tight estimates for the diameter of the giant component, but we need this cruder estimate with a lower probability of being exceeded. We will prove this later in Lemma \ref{dimlem}.
\end{enumerate}

Part \eqref{p.xdiam} implies that with high probability, for any $u,v$ at distance $\geq \g$ and all $B\in \cF_{\{u,v\}}$ and vertices $x,y\in X_B$,
\begin{equation}
\label{e.bounce}
\dx x y\leq 100\d\times  \frac{\log N}{\log Np}\leq 
\frac {100} {\log n} \frac{\log n-d(\log \om+\log\log n)+O(1)}{\log \om - O(1)}=o(1).
\end{equation}
As the giant components $X_B$ ($B\in \cF_{u,v}$) contain in total at least
$\frac{C}{\d}\cdot\frac{N_0}{3}=\frac{C\nu_dn}{3\d^{d-1}}$ vertices, the probability that $u$ has no neighbor in these giant components is at most
\[
(1-p)^{\frac{C\nu_dn}{3\d^{d-1}}}\leq e^{-\frac{C\nu_dnp}{3\d^{d-1}}}=n^{-\om C\nu_d/3}.
\]
In particular, the probability is small after multiplication by $n^2$, and thus a.a.s., for all pairs $u,v\in X_{n,p}$, $u$ has a neighbor in $X_B$ for some $B\in \cF_{u,v}$ and $v$ has a neighbor in $X_{B'}$ for some $B'\in \cF_{v,u}$. Now by part \eqref{p.between} and equation \eqref{e.bounce}, we can find a path
\beq{f6}
u,w_0,w_1,\dots,w_s,z_t,z_{t-1},\dots,z_1,z_0,v
\eeq
from $u$ to $v$ where the $w_i$'s are all in some $X_B$ for $B\in \cF_{u,v}$ and the total Euclidean length of the path $w_0,\dots,w_s$ tends to zero with $n$, and the $z_i$'s are all in some $\bar X_B$ for some $B\in \cF_{v,u}$, and the total Euclidean length of the path $z_0,\dots,w_t$ tends to zero with $n$.  Meanwhile, the Euclidean segments corresponding to the three edges $u,w_0$, $w_s,z_t$, and $z_0,v$ lie within $\d$ of disjoint segments of the line segment from $u$ to $v$, and thus have total length $\leq \de u v + 6\d,$ giving
\begin{equation}
\label{e.farcase}
\dx u v\leq \de u v + 6\d+o(1)=\de u v+o(1).
\end{equation}

We must also handle vertices $u,v$ with $\de u v<\g$.  We have that 
\beq{f4}
\Pr(\exists v,B:v\text{ is not adjacent to }B)\leq n^2(1-p)^{N_0p/3}
\eeq
A fortiori, a.a.s. all vertices $u,v$ are adjacent to some vertex in any ball of radius $\g$.  In particular, we can find $w\sim u$ within distance $\tfrac 5 2 \g$ of $u$, $z\sim v$ within distance $\tfrac 5 2 \g$ of $v$, such that 
\[
\g \leq \de w z \leq 5\g,
\]
implying via \eqref{e.farcase} that 
\begin{equation}
\dx u v \leq 6\g +6\d.
\end{equation}
In particular, $\dx u v-\de u v$ is bounded by a constant which can be made arbitrarily small by making $n$ large.
\end{proof}
We complete the proof of Theorem \ref{t.expected} by proving
\begin{lemma}\label{dimlem}
Suppose that $Np=\om\to\infty, \om=O(\log N)$ and let $K$ denote the unique giant component of size $N-o(N)$ in $G_{N,p}$, that q.s.\footnote{A sequence of events $\cE_n$ occurs {\em quite surely} q.s. if $\Pr(\neg\cE_n)=O(n^{-\om(1)})$.} exists. Then for $L$ large,
$$\Pr\brac{\diam(K)\geq \frac{L\log N}{\log Np}}\leq O(N^{-L/20}).$$
\end{lemma}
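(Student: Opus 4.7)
The plan combines BFS neighborhood growth with a sprinkling argument. Write $G \sim G(N,p)$ as $G = G_1 \cup G_2$ where $G_1, G_2 \sim G(N, p/2)$ are independent, and set $d = \log N/\log \omega$. The argument rests on the following per-vertex claim: for each fixed vertex $v$, with probability $\geq 1 - N^{-L/10}$, either the $G_1$-BFS ball $B_{k^*}^{G_1}(v)$ of radius $k^* = \lceil L d/3 \rceil$ has size at least $N^{9/10}$, or $v$ lies in a $G_1$-component of size at most $N^{9/10}$.

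Granted this claim, the diameter bound follows quickly. For any pair $u,v$ in the $G$-giant $K$, vertices lying in a small $G_1$-component are attached to $K$ via a short chain of $G_2$-edges (since such vertices are few and each has many random $G_2$-connections into the bulk); otherwise both $B_1 = B_{k^*}^{G_1}(u)$ and $B_2 = B_{k^*}^{G_1}(v)$ have size $\geq N^{9/10}$. If $B_1 \cap B_2 \neq \emptyset$, then $\dx u v \leq 2k^* \leq Ld$. If they are disjoint, the probability (over the independent $G_2$-edges) that no $G_2$-edge spans $B_1$ and $B_2$ is at most
\[
(1-p/2)^{|B_1||B_2|} \leq \exp\!\bigl(-\omega N^{4/5}/4\bigr),
\]
which is super-polynomially small. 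Combining with a union bound over $v$ and over pairs gives the total bound $O(N^{-L/20})$ for $L$ large.

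To prove the per-vertex claim, I would run BFS in $G_1$ from $v$ and track the boundary sizes $Z_j = |B_j^{G_1}(v) \setminus B_{j-1}^{G_1}(v)|$. Provided the cumulative size is at most $N/2$ and $Z_j p \leq 1/2$, $Z_{j+1}$ conditional on history stochastically dominates $\text{Bin}(N/2,\, Z_j p/4)$ (using $1-(1-p/2)^{Z_j} \geq Z_j p/4$), so $E[Z_{j+1}\mid\cdot] \geq Z_j \omega/8$. A Chernoff bound then gives $\Pr(Z_{j+1} < Z_j\omega/16 \mid \text{history}) \leq \exp(-c Z_j \omega)$ for an absolute $c>0$. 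Iterating across $j \leq k^*$, as long as the BFS remains non-extinct, drives $Z_j$ to $\geq N^{9/10}$ in $O(d)$ steps.

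The main obstacle is the initial phase of the BFS, when $Z_j = O(1)$ and the Chernoff exponent $\exp(-Z_j\omega)$ is not polynomially small in $N$ for slowly growing $\omega$. I would handle it by bounding the probability of slow-but-non-extinct growth directly: a BFS surviving $j$ levels with $Z_i \leq h$ throughout corresponds to a connected rooted subgraph of $G_1$ on some $m \leq jh$ vertices with no external edges, and by Cayley-type enumeration the expected number of such subgraphs at $v$ is at most a polynomial-in-$N$ factor times $(e\omega e^{-\omega})^{m-1}$. For $\omega$ large (holding eventually, since $\omega\to\infty$), this is exponentially small in $m$; taking $m = \Theta((\log N)/\omega)$ achieves the target $N^{-L/10}$, and thereafter the iterated Chernoff argument above takes over.
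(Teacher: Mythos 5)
Your high-level skeleton --- grow BFS balls, then use a fresh edge between two large balls --- matches the paper's, but your implementation via $G = G_1\cup G_2$ sprinkling creates two gaps that the paper's one-graph BFS-exposure argument avoids. First, the small-$G_1$-component case is under-argued: you claim such a $v$ attaches to the bulk via a ``short chain of $G_2$-edges,'' but the probability that a fixed vertex has no $G_2$-neighbor in the $G_1$-giant is $(1-p/2)^{N-o(N)}=e^{-\Omega(\omega)}$, which when $\omega$ grows slowly (say $\omega=\log\log N$) is nowhere near $N^{-L/10}$; so you would need a multi-hop chain whose length and failure probability you do not control, and whose contribution to the diameter must be compared with $L\log N/\log\omega$. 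The paper sidesteps this entirely by running BFS from $v$ and from $w$ inside the same $G_{N,p}$: since $v,w\in K$ and $K$ is connected, either the BFS from $v$ finds $w$ or its frontier $S_{k_1}$ is nonempty, so there is no small-component case, and the edges between the final frontiers $S_t$ and $T_s$ are unconditioned simply because neither BFS has examined them.

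Second, your early-phase bound is wrong as stated. ``A BFS surviving $j$ levels with $Z_i\leq h$ throughout'' is not a connected rooted subgraph with \emph{no} external edges --- ``surviving'' means the frontier is nonempty, which is precisely the statement that the ball has edges crossing its boundary --- so the enumeration factor you invoke (the $e^{-\omega}$ coming from $(1-p/2)^{m(N-m)}$) does not apply to the event you are bounding. What you actually need is the weaker structural statement that the visited set is connected and most of its vertices have fewer than $\omega/2$ edges leaving the set; this is exactly the event $\cB(k)$ that the paper enumerates (together with the small dense-set event $\cD(k)$), and the union bound over $\cB\cup\cD$ is what forces the geometric frontier growth $|S_{t+1}|\geq\omega|S_t|/4$ once $|S_{\leq t}|\geq\kappa(L/4)$. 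If you replace ``no external edges'' with ``at least half the vertices have fewer than $\omega/2$ outside neighbors,'' your Cayley-type enumeration becomes the paper's $\cB(k)$ bound and the rest of your BFS analysis can be repaired.
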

\begin{proof}
Let $\cB(k)$ be the event that there exists a set $S$ of $k$ vertices in $G_{N,p}$ that induces a connected subgraph and in which more than half of the vertices have less than $\om/2$ neighbors outside $S$. Also, let $\cB(k_1,k_2)=\bigcup_{k=k_1}^{k_2}\cB_k$. Then for $k=o(N)$ we have
\begin{align}
\Pr(\cB(k))&\leq \binom{N}{k}p^{k-1}k^{k-2}2^k\brac{\sum_{i=0}^{\om/2}\binom{N-k}{i}p^i(1-p)^{N-k-i}}^{k/2}\label{sp}\\
&\leq p^{-1}(2e\om e^{-\om/3})^k\leq Ne^{-k\om/4}.\label{sp1}
\end{align}
{\bf Explanation of \ref{sp}:} $\binom{N}{k}$ bounds the number of choices for $S$. We then choose a spanning tree $T$ for $S$ in $k^{k-2}$ ways. We multiply by $p^{k-1}$, the probability that $T$ exists. We then choose half the vertices $X$ of $S$ in at most $2^k$ ways and then multiply by the probability that each $x\in X$ has at most $\om/2$ neighbors in $[N]\setminus S$.

If $\k=\k(L)=\frac{L\log N}{\log Np}$ then \eqref{sp1} implies that $\Pr(\cB(\k)\leq N^{1-L/10}$.

Next let $\cD(k)=\cD_N(k)$ be the event that there exists a set $S$ of size $k$ for which the number of edges $e(S)$ contained in $S$ satisfies $e(S)\geq 2k$. Then,
$$\Pr(\cD(k))\leq \binom{N}{k}\binom{\binom{k}{2}}{2k}p^{2k}\leq \brac{\frac{Ne}{k}\cdot\bfrac{ke\om}{2N}^2}^k= \bfrac{ke^3\om^2}{2N}^k.$$
Since $\om=O(\log n)$ we have that q.s.
\beq{sp2}
\not\exists k\in [\k(1),N^{3/4}]\text{ such that $\cD(k)$ occurs}.
\eeq

Suppose then that $\cB(k_1,k_2)\cup \cD(k_1,k_2)$ does not occur, where $k_1=\k(L/4)$ and $k_2=N^{3/4}$. Fix a pair of vertices $v,w$ and first do a breadth first search (BFS) from $v\in K$ and create sets $S_0,S_1,\dots,S_{k_1}$ where $S_i$ is the set of vertices at distance $i$ from $v$. We continue this construction unless we find that for some $i$, we have $w\in S_i$. Failing this, we must have $S_{k_1}\neq\emptyset$ and $|S_{\leq k_1}|\geq k_1$ where $S_{\leq t}=\bigcup_{i=0}^tS_i$ for $t\geq 0$. We continue this construction for $t\geq k_1$ and we see that $k_1\leq |S_{\leq t}|\leq N^{2/3}$ implies that $|S_{t+1}|\geq \om|S_t|/4$. This is because only vertices in $S_t$ have neighbors outside $S_{\leq t}$  and we have assumed that $\cB(|S_{\leq t}|)$ does not occur and because of \eqref{sp2}. Thus if $|S_{t+1}|<\om|S_t|/4$ then $S_{\leq t+1}$ has at most $\om N^{2/3}/4$ vertices and more than $\om N^{2/3}/2$ edges.

Thus if $L$ is large, then we find that there exists $t\leq k_1+\k(3/4)$ such that $|S_t|\geq N^{2/3}$. Now apply the same argument for BFS from $w$ to create sets $T_0,T_1,\ldots,T_s$, where either we reach $v$ or find that $|T_s|\geq N^{2/3}$ where $s\leq k_1+\k(3/4)$. At this point the edges between $S_t$ and $T_s$ are unconditioned and the probability there is no $S_t:T_s$ edge is at most $(1-p)^{N^{4/3}}=O(e^{-\Omega(N^{1/3})})$.
\end{proof}

\section{Traveling among all vertices}\label{TSP}
Our first aim is to prove Theorem \ref{tsp}; this will be accomplished in Section \ref{pgtsp}, below.  In fact, we will prove the following general statement, which will also be useful in the proof of Theorem \ref{worst}:

\begin{theorem}\label{gtsp}
Let $\yd{1}\sbs [0,1]^d$ denote a set of points chosen from any fixed distribution, such that the cardinality $Y=|\yd{1}|$ satisfies $\E(Y)=\mu>0$ and 
$\Pr(Y\geq k)\leq C\rho^k$ for all $k$, for some $C>0,\rho<1$  Let $\ydt$ denote a random set of points in $[0,t]^d$ obtained from $t^d$ independent copies $\yd{1}+x$ $(x\in \{0,\cdots,t-1\}^d).$

If $p>0$ is constant, $d\geq 2$, and $\gyp$ denotes the random graph on $\ydt$ with independent edge probabilities $p$, then  $\exists \b>0$ (depending on $p$ and the process generating $\yd 1$) such that
\begin{enumerate}[(i)]
\item $T(\gyp)\sim \b t^d$ a.a.s., and
\item $T(\gyp)\leq \b t^d+o(t^d)$ q.s.\footnote{In this context $O(n^{-\om(1)})$ is replaced by $O(t^{-\om(1)})$.}
\end{enumerate}
\end{theorem}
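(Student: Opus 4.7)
The plan is to adapt the classical Beardwood--Halton--Hammersley subadditivity framework, with extra care for the random edge set at constant density $p$ and the non-uniform point process. Write $f(t) := \E T(\gyp)$, and for $s \mid t$ set $k := t/s$.

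First I would establish approximate subadditivity. Partition $[0,t]^d$ into $k^d$ axis-parallel sub-cubes of side $s$; by the definition of $\ydt$, the restrictions to distinct sub-cubes are independent copies of $\yd s$, and the edges of $\gyp$ are independent $\Be(p)$. Construct a candidate tour by taking an optimal tour on each sub-cube, costing $\sum_{i=1}^{k^d} T_i$ with $T_i$ i.i.d.\ copies of $T(\gy d s p)$, and then splicing the sub-tours along a snake-like walk through the $k^d$ sub-cubes using one crossing edge per adjacent pair. The exponential tail on $|\yd 1|$ plus Chernoff gives $|\yd s| = \Theta(\mu s^d)$ with failure probability $\leq e^{-\Omega(s^d)}$; conditional on this, the probability that an adjacent pair of sub-cubes admits no crossing edge is $\leq (1-p)^{\Omega(s^{2d})}$. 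Each splice costs at most $O(s)$ (two added edges of length $\le O(s)$ minus two removed edges of non-negative length), so q.s.\ the total splicing overhead is $O(k^d s) = O(t^d/s^{d-1})$. Hence $T(\gyp) \leq \sum_i T_i + O(t^d/s^{d-1})$ q.s., and taking expectations yields $f(t)/t^d \leq f(s)/s^d + O(s^{1-d})$. A standard $\limsup/\liminf$ argument then shows that $\beta := \lim_{t \to \infty} f(t)/t^d$ exists in $[0,\infty)$.

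To finish (i), I would combine this with a matching $\Omega(t^d)$ lower bound on $\E T(\gyp)$ and with concentration. The lower bound comes from $T(\gyp) \geq \tfrac{1}{2} \sum_v \min_{uv \in E(\gyp)} \de u v$: since $p$ is constant and the mean point density is $\mu$, the expected number of graph-neighbors within Euclidean distance $r$ of a given vertex is $O(r^d)$, and a short calculation gives $\E T(\gyp) = \Omega(t^d)$, so $\beta > 0$. Concentration of $T(\gyp)$ about $f(t)$ at scale $o(t^d)$ follows from a bounded-differences/Azuma--Hoeffding argument over the $t^d$ independent sub-cube realizations of $\yd 1$ and the independent edge indicators, using the exponential tail on $|\yd 1|$ to truncate the a priori $O(t)$ worst-case sensitivity to relocating a single point. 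Together these give $T(\gyp) \sim \beta t^d$ a.a.s., proving (i). For the quite-sure bound (ii), fix $\varepsilon>0$, choose $s$ a large enough constant so that $f(s)/s^d \leq \beta + \varepsilon$, and apply the construction above: the total tour cost is dominated by a sum of $k^d = (t/s)^d$ i.i.d.\ copies of $T(\gy d s p)$, each bounded by $O(s) \cdot |\yd s|$, where $|\yd s|$ (a sum of $s^d$ i.i.d.\ copies of $|\yd 1|$) has a finite exponential moment. A Chernoff bound then gives $\sum_i T_i \leq (\beta + 2\varepsilon)t^d$ with failure probability $\exp(-\Omega(t^d)) = t^{-\om(1)}$, and the sub-cube and crossing-edge bad events are q.s.\ negligible by the same tail estimates; letting $\varepsilon = \varepsilon(t) \downarrow 0$ slowly in $t$ yields (ii).

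The main obstacle is the concentration step for (i): the deterministic worst-case sensitivity of $T(\gyp)$ to moving a single point is $O(t)$, which would give concentration only at scale $\Theta(t^{1+d/2})$, not $o(t^d)$. The fix is either to truncate to the q.s.\ event that no sub-cube has more than $(\log t)^2$ points (using the exponential tail on $|\yd 1|$) and invoke bounded differences on the truncated functional, or to apply Talagrand's convex-distance inequality in the spirit of Rhee--Talagrand. A secondary technicality is to absorb the boundary layer of $O(t^{d-1} s)$ points left over when $s \nmid t$ into the tour at additional cost $o(t^d)$, which is routine so long as $s$ grows slowly with $t$.
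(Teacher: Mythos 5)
Your outline follows the same broad BHH-style strategy as the paper --- a subadditive decomposition into sub-cubes with spliced sub-tours, a Chernoff argument on the i.i.d.\ sub-cube tour lengths for the q.s.\ upper bound (ii), and a $\liminf$ argument for the existence of $\beta$ --- but your concentration step for (i) has a genuine gap, and the paper's treatment of it is notably different.

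The gap is in the claim that truncating to $(\log t)^2$ points per sub-cube rescues the bounded-differences argument. Truncation bounds the \emph{number} of points whose positions change when one sub-cube realization is resampled, but it does nothing to the per-point sensitivity: in the constant-$p$ model a single inserted or relocated vertex may be forced, by the absence of graph edges, to be connected to a partner at distance $\Theta(t)$, so the per-coordinate Lipschitz constant after truncation is still $\Theta(t\,\mathrm{polylog}\,t)$. With $t^d$ coordinates, Azuma then gives fluctuation scale $\Theta(t^{1+d/2}\,\mathrm{polylog}\,t)$, which is $\Omega(t^d)$ precisely at $d=2$ --- the lowest dimension the theorem covers, and the hardest case. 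Invoking Talagrand's convex-distance inequality ``in the spirit of Rhee--Talagrand'' is a reasonable instinct, but porting that machinery to a setting with a random edge set is a substantial piece of work that the proposal does not begin to carry out. The paper sidesteps two-sided concentration entirely: having shown $\E T(\gyp)=\beta t^d+o(t^d)$ and, q.s., $T(\gyp)\leq \beta t^d+o(t^d)$, it observes that $T$ cannot sit below $\beta t^d-\d_3$ with non-vanishing probability without pulling the mean below $\beta t^d-o(\d_3)$, a contradiction; this one-paragraph argument replaces the entire concentration apparatus and works for all $d\geq 2$.

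Two further points where your sketch is thinner than what is actually needed. First, the subadditivity step requires $s\mid t$, and you defer the $s\nmid t$ case to a ``routine'' boundary-layer absorption. The paper instead builds the whole argument around decompositions into \emph{near-cubes} (sides in $\{u,u+1\}$) together with a smoothing lemma ($\F^d_p(t+h)\leq\F^d_p(t)+A_{p,d}ht^{d-1}$); this in turn triggers an induction on $d$ with an explicit $d=1$ base case ($\E T\leq A_pt$) to control the $(d-1)$-dimensional boundary slabs. That base case needs its own argument (the paper proves it via a lemma on random permutations and inversions); it is not free. Second, for the lower bound $\beta>0$, your nearest-graph-neighbor estimate works, but note the paper gets this more cheaply by counting occupied unit sub-cubes. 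Neither point is a show-stopper, but both would require real work to fill in, whereas the concentration gap is the substantive one.
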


The restriction $\Pr\left(|\yd{1}|\geq k\right)\leq \rho^k$ simply ensures that we have exponential tail bounds on the number of points in a large number of independent copies of $\yd{1}$:
\begin{observation}\label{o.Ychernoff}
For the total number $T_n$ of points in $n$ independent copies of $\yd{1}$, we have
\begin{equation}\label{e.Ychernoff}
\pushQED{\qed} 
\Pr(|T_n-\mu n|>\delta \mu n)<e^{-A_\rho \delta^2 \mu^2 n}.\qedhere
\popQED
\end{equation}
\end{observation}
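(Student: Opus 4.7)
The plan is to establish this as a standard Chernoff bound for a sum of i.i.d.\ sub-exponential random variables, using the geometric tail hypothesis $\Pr(Y\geq k)\leq C\rho^k$. The first step is to observe that the moment generating function $\phi(s):=\E(e^{sY})$ is finite in a neighborhood of $0$: summation by parts with the tail bound shows $\phi(s_0)<\infty$ for any fixed $s_0\in(0,-\log\rho)$. In particular $Y$ has finite moments of all orders, with $\E(Y^2)$ bounded by a constant $M=M(C,\rho)$.

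Next I would use a second-order Taylor expansion around $s=0$ to bound
\[
\log\phi(s)\ \leq\ s\mu + B\,s^2\qquad\text{for all }|s|\leq s_1,
\]
where $s_1:=s_0/2$ and $B=B(C,\rho)$ is a constant. The existence of such a $B$ follows from the fact that $\phi$ is smooth and convex on $(-s_0,s_0)$ with $\phi(0)=1$ and $\phi'(0)=\mu$, so $\log\phi$ has bounded second derivative on $[-s_1,s_1]$.

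Applying the standard exponential Markov inequality to $T_n$, for any $s\in(0,s_1]$,
\[
\Pr\bigl(T_n\geq(1+\delta)\mu n\bigr)\ \leq\ e^{-s(1+\delta)\mu n}\phi(s)^n\ \leq\ \exp\bigl(-s\delta\mu n + B s^2 n\bigr).
\]
Optimizing by choosing $s=\min\bigl(s_1,\delta\mu/(2B)\bigr)$ yields the bound $\exp(-A_\rho\delta^2\mu^2 n)$ in the regime $\delta\mu\leq 2Bs_1$; in the complementary regime (large $\delta$) the choice $s=s_1$ gives an even sharper bound linear in $\delta\mu n$, which is dominated by the claimed quadratic form. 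The lower tail $\Pr\bigl(T_n\leq(1-\delta)\mu n\bigr)$ is handled symmetrically with $s<0$, where $\phi(s)$ is automatically finite since $Y\geq 0$.

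There is no real obstacle: the only structural input is that the geometric tail produces a well-defined MGF near the origin, after which the $\mu^2$ in the exponent emerges naturally from optimizing the quadratic upper bound on $\log\phi(s)$. The constant $A_\rho$ ends up depending only on $C$ and $\rho$, as asserted.
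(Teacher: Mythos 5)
Your proposal is correct and follows essentially the same route as the paper: a standard Chernoff argument, using the hypothesis $\Pr(Y\geq k)\leq C\rho^k$ only to show that the MGF $\phi(s)=\E(e^{sY})$ is finite on a neighborhood of $0$ and to get a usable upper bound there, then optimizing the exponential Markov inequality. The paper bounds $\E(e^{\lambda Y})$ directly via the elementary pointwise inequality $e^x\leq 1+x+x^2e^x$ (for $x\geq 0$) and the geometric tail, whereas you pass through the smoothness and convexity of $\log\phi$ on a compact subinterval to get $\log\phi(s)\leq s\mu+Bs^2$; these are interchangeable and both yield an exponent $-\Theta(\delta^2\mu^2 n)$ after optimizing $s=\Theta(\delta\mu)$.

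One small slip worth correcting: in the complementary regime $\delta\mu>2Bs_1$, taking $s=s_1$ gives an exponent that is \emph{linear} in $\delta\mu n$, which for large $\delta$ is \emph{weaker}, not sharper, than the stated quadratic form $e^{-A_\rho\delta^2\mu^2 n}$; you have the direction of the comparison reversed. A sum of sub-exponential variables genuinely does not satisfy a uniform-in-$\delta$ quadratic tail bound for the upper tail, so no proof can fix this at large $\delta$. This is not a real defect for the purpose at hand — the paper only invokes Observation \ref{o.Ychernoff} with $\delta$ a fixed constant less than $1$, where the quadratic regime applies — and the paper's own proof shares the same restriction (its parameters $\lambda,\epsilon$ are admissible only for bounded $\delta$). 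You should just phrase the large-$\delta$ case honestly (e.g.\ restrict to $\delta\leq 1$ or let $A_\rho$ absorb an upper bound on $\delta$) rather than claim domination in the wrong direction.
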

This is a straightforward consequence, but we do not have a reference and so we give a sketch proof in the appendix.

Note that the conditions on the distribution of $\ydt$ are satisfied for a Poisson cloud of intensity 1, and it is via this case that we will derive Theorem \ref{tsp}.  Other examples for which these conditions hold include the case where $\ydt$ is simply a suitable grid of points, or is a random subset of a suitable grid of points in $[0,t]^d$, and we will make use of this latter case of Theorem \ref{gtsp} in our proof of Theorem \ref{worst}.

Our proof is by induction on $d$.  For technical reasons (see also Question \ref{q3} of Section \ref{Qs}) Theorems \ref{gtsp} and \ref{tsp} are given just for $d\geq 2$, and before beginning with the induction, we must carry out a separate argument to bound the length of the tour in 1 dimension.
\subsection{Bounding the expected tour length in 1 dimension}
\label{s.d1}

We begin with the following simple lemma.
\begin{lemma}\label{permutations}  
Let $\sigma$ be a permutation of $[n]$, and let $\ell(\sigma)$ be $\sum_{i=1}^{n-1} |\sigma_{i+1}-\sigma_i|$. Then
\beq{invo}
\ell(\sigma)<\sigma_n+3\cdot\inv(\sigma),
\eeq
where $\inv(\sigma)$ is the number of inversions in $\sigma$.
\end{lemma}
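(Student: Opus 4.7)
The first step is to reformulate \eqref{invo} as a bound on descent depth. Writing $|\sigma_{i+1}-\sigma_i|=(\sigma_{i+1}-\sigma_i)+2(\sigma_i-\sigma_{i+1})^+$ and telescoping gives the identity
\[
\ell(\sigma)=(\sigma_n-\sigma_1)+2D(\sigma),\qquad D(\sigma):=\sum_{i:\sigma_i>\sigma_{i+1}}(\sigma_i-\sigma_{i+1}),
\]
so the inequality is equivalent to $2D(\sigma)<\sigma_1+3\,\inv(\sigma)$.

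I would then bound $D(\sigma)$ by a charging argument. For each descent position $i$ and each integer $v\in[\sigma_{i+1},\sigma_i-1]$ -- so that the total number of descent--value pairs is exactly $D(\sigma)$ -- I assign an inversion of $\sigma$ as follows. Set $s:=\sigma^{-1}(v)$. If $v=\sigma_{i+1}$ the pair $(i,i+1)$ is an inversion; if $s<i$ then $(s,i+1)$ is an inversion because $\sigma_s=v>\sigma_{i+1}$; and if $s>i+1$ then $(i,s)$ is an inversion because $\sigma_i>v=\sigma_s$. A short case check shows that each inversion of $\sigma$ is the image of at most two such pairs under this map, with adjacent inversions ($q=p+1$) always receiving exactly one preimage. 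This already gives the crude bound $D(\sigma)\le 2\,\inv(\sigma)$.

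To close the gap to the stronger $2D(\sigma)<\sigma_1+3\,\inv(\sigma)$, I would induct on $\inv(\sigma)$. The base case $\inv(\sigma)=0$ forces $\sigma=\mathrm{id}$, and so $\ell=n-1<n=\sigma_n$. In the inductive step, swap an adjacent inversion at position $i$ to obtain $\tau$ with $\inv(\tau)=\inv(\sigma)-1$; writing $a,b,c,d$ for the values at positions $i-1,i,i+1,i+2$, a direct calculation gives
\[
\ell(\sigma)-\ell(\tau)\;=\;(|a-b|-|a-c|)+(|c-d|-|b-d|)\;\le\;2(b-c),
\]
with the boundary cases $i\in\{1,n-1\}$ handled separately (in the latter, $\sigma_n$ itself changes by $b-c$, which provides useful compensation). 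When an adjacent inversion of consecutive values $b-c=1$ is available, the one-step change in $\ell-\sigma_n$ fits inside the $+3$ budget coming from the decrease in $3\,\inv$, and the inductive hypothesis closes the loop.

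The main obstacle is the case in which every adjacent inversion has $b-c\ge 2$, so that a single swap can raise $\ell$ by more than the $3$-unit budget per lost inversion. To handle this I would use one of two strategies: either (i)~a multi-step reduction that bubbles a distinguished value (such as $\sigma_n$, or the maximum) into a fixed position and amortizes the cumulative change in $\ell-\sigma_n$ against the total number of inversions consumed along the way; or (ii)~a refined version of the charging argument above which isolates the sporadic doubly-charged inversions (only possible in the configuration $\sigma_{p+1}<\sigma_q<\sigma_p<\sigma_{q-1}$) and absorbs their excess into the $\sigma_1$ term on the right-hand side. Managing this trade-off between the crude 2-to-1 multiplicity in the charging and the required effective factor of $3/2$, and verifying that the strict inequality survives, is the crux of the argument.
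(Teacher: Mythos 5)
Your proposal does not amount to a proof. The reformulation is correct: telescoping $|\sigma_{i+1}-\sigma_i|=(\sigma_{i+1}-\sigma_i)+2\max(\sigma_i-\sigma_{i+1},0)$ gives $\ell(\sigma)=(\sigma_n-\sigma_1)+2D(\sigma)$, so the inequality is equivalent to $2D(\sigma)<\sigma_1+3\,\mathrm{inv}(\sigma)$, and the charging argument is a sensible route to $D(\sigma)\le 2\,\mathrm{inv}(\sigma)$. But, as you yourself observe, that only yields $2D\le 4\,\mathrm{inv}$, which is too weak, and the step needed to sharpen the effective constant --- whether by a multi-step bubbling argument or by isolating and absorbing the doubly-charged inversions --- is precisely where you stop: you offer two candidate strategies, carry neither out, and label the remaining work ``the crux of the argument.'' That is a genuine gap, not a loose end; the lemma is not established by what you wrote.

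For comparison, the paper takes a different and shorter route: induction on $n$ by truncation. It sets $\sigma'_i=\sigma_i$ if $\sigma_i<\sigma_n$ and $\sigma'_i=\sigma_i-1$ otherwise, obtaining a permutation $\sigma'$ of $[n-1]$. Then $\ell(\sigma)$ equals $\ell(\sigma')$ plus the last step $|\sigma_n-\sigma_{n-1}|$ plus the number of indices $i\le n-2$ at which $\sigma_n$ lies strictly between $\sigma_i$ and $\sigma_{i+1}$, while $\mathrm{inv}(\sigma)-\mathrm{inv}(\sigma')=n-\sigma_n$. Feeding in the induction hypothesis $\ell(\sigma')<\sigma'_{n-1}+3\,\mathrm{inv}(\sigma')$ and simplifying finishes the argument in a few lines, with no need to track inversion multiplicities. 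If you want to salvage your approach, I would switch to this truncation induction rather than trying to improve the $2$-to-$1$ charging ratio to an effective $3/2$, which is where the real difficulty in your plan lies and where you got stuck.
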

\begin{proof}
We prove this by induction on $n$. It is trivially true for $n=1$ since in this case $\ell(\sigma)=0$. Assume now that $n>1$, and given a permutation $\sigma$ of $[n]$, consider permutation $\sigma'$ of $[n-1]$ obtained by truncation:
\[
\sigma'_{i}=\begin{cases}\sigma_i &\mbox{ if }\sigma_i<\sigma_n\\
\sigma_i-1 &\mbox{ if }\sigma_i> \sigma_n
\end{cases}
\]
We have by induction that 
\beq{induct}
\ell(\sigma')\leq \sigma'_{n-1}+3\cdot\inv(\sigma').
\eeq
Now observe that
\begin{align*}
\ell(\sigma)&=\ell(\sigma')+|\sigma_n-\sigma_{n-1}|+|\left\{i|\sigma_i<\sigma_n<\sigma_{i+1} \mbox{ OR } \sigma_i>\sigma_n>\sigma_{i+1}\right\}|\\
&\leq \ell(\sigma')+|\sigma_n-\sigma_{n-1}|+\inv(\sigma)-\inv(\sigma'),
\end{align*}
and, recalling that $\inv(\sigma)=\inv(\sigma^{-1})$,
\[
\inv(\sigma)-\inv(\sigma')=n-\sigma_n.
\]
Since $\sigma'_{n-1}\leq \sigma_{n-1}$, \eqref{induct} gives that
\begin{align*}
\ell(\sigma)&\leq\sigma_{n-1}+3\cdot\inv(\sigma') +|\sigma_n-\sigma_{n-1}|+\inv(\sigma)-\inv(\sigma')\\
&=\sigma_{n-1}+\inv(\sigma')+2(\inv(\sigma)-n+\sigma_n)+|\sigma_n-\sigma_{n-1}|+\inv(\sigma)-\inv(\sigma')\\
&=\sigma_{n-1}+3\cdot\inv(\sigma)-2n+2\sigma_n+|\sigma_n-\sigma_{n-1}|\\
&=\sigma_n+3\cdot\inv(\sigma)-(2n-\sigma_{n-1}-\sigma_n-|\sigma_n-\sigma_{n-1}|)\\
&\leq \sigma_n+3\cdot\inv(\sigma).\qedhere
\end{align*}
\end{proof}
For the 1-dimension case of Theorem \ref{tsp}, we have, roughly speaking, a 1-dimensional string of points joined by some random edges.  Lemma \ref{permutations} allows us to prove the following lemma, which begins to approximate this situation.
\begin{lemma}\label{basic}
Consider the random graph $G=G_{n,p}$ on the vertex set $[n]$ with constant $p$, where each edge $\{i,j\}\in E(G)$ is given length $|i-j|\in \mathbb{N}$. Let $Z$ denote the minimum length of a Hamilton cycle in $G$ starting at vertex 1, assuming one exists. If no such cycle exists let $Z=n^2$. Then there exists a constant $A_p$ such that 
$$\E(Z)\leq A_p n\text{ and }Z\leq \frac{2A_pn}{p},\ q.s.$$
\end{lemma}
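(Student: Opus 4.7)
I would construct an explicit Hamilton cycle in $G_{n,p}$ starting at vertex $1$ by a block decomposition, and bound its length via Lemma~\ref{permutations}. The expectation bound comes directly from analyzing the construction; the q.s.\ bound comes from concentration of the relevant statistics.

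\textbf{Construction.} Fix a constant $K = K_p$ chosen large enough (depending on $p$) that $G_{K,p}$ is Hamilton-connected with probability at least some $\pi_p \in (0,1)$. Partition $[n]$ into $M = \lfloor n/K \rfloor$ consecutive blocks $B_1,\dots,B_M$ of size $K$. Starting from $B_1$, greedily form \emph{super-blocks}: extend the current super-block by adjoining subsequent blocks until (a) the induced subgraph admits a Hamilton path from its leftmost to its rightmost vertex, and (b) there is at least one $G_{n,p}$-edge from the last block of the super-block to the first block of the next one. Concatenating the within-super-block Hamilton paths via the stitching edges and closing with a final edge back to $1$ yields a Hamilton cycle.

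\textbf{Length bound.} Let $R_j\in\N$ be the size of the $j$-th super-block in units of $K$-blocks. Because conditions (a), (b) are satisfied by a single new block with probability at least some constant $\alpha_p>0$ independent of the history, $R_j$ is stochastically dominated by an iid geometric random variable with parameter $\alpha_p$, so $\Pr(R_j\geq r)\leq (1-\alpha_p)^{r-1}$. Viewing the Hamilton path through super-block $j$ (on $R_jK$ consecutive integers) as a permutation of $[R_jK]$, Lemma~\ref{permutations} bounds its length by $R_jK + 3\binom{R_jK}{2}=O(R_j^2K^2)$. Adding $O(K)$ per stitching edge and $O(n)$ for the closing edge,
\[
Z \;\leq\; O\!\Big(K^2 \sum_j R_j^2\Big)+O(n).
\]
Taking expectations with $\E[R_j^2]=O(1)$ gives $\E[Z]\leq O(MK^2)+O(n)=O(nK)$, which is $A_p n$ for a suitable $A_p=A_p(p)$.

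\textbf{Q.s.\ bound and main obstacle.} The variables $R_j^2$ are sub-exponential with constant parameter (since $R_j$ is geometric), so $\sum_j R_j^2$ concentrates around its mean $O(M)$ by a Bernstein-type inequality with deviation probability $\exp(-\Omega(M))=\exp(-\Omega(n/K))=O(n^{-\omega(1)})$. This gives $Z=O(nK)\leq 2A_pn/p$ q.s., the factor $2/p$ absorbing concentration constants. The event that the greedy super-block extension fails altogether---forcing the penalty $Z=n^2$---requires a single super-block to engulf all of $[n]$ while remaining not Hamilton-connected, which has probability $\exp(-\Omega(n))$ for constant $p$ and is negligible. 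The main technical obstacle is that a single block of constant size $K$ fails with constant probability, so one cannot simply take a union bound; the greedy extension converts this into geometric-tail super-block sizes, and Lemma~\ref{permutations} converts sizes to tour lengths via the crude inversion count $\binom{m}{2}$, which is enough because $K=K_p$ is constant in $n$.
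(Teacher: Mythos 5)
Your approach is genuinely different from the paper's (which builds a single path greedily by always moving to the lowest-indexed unvisited $G_1$-neighbor, bounds the resulting inversions via the process $\Delta_j = L_j - j$, and then uses two fresh rounds of edges to patch in stragglers and close the cycle). Unfortunately, the key probabilistic claim in your construction does not hold as stated, and this is a genuine gap rather than a detail to fill in.

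The problem is the assertion that ``conditions (a), (b) are satisfied by a single new block with probability at least some constant $\alpha_p>0$ \emph{independent of the history}.'' Condition (a) asks that the \emph{entire} current super-block $B_1\cup\cdots\cup B_r$ admit a Hamilton path from its leftmost to its rightmost vertex, and this event depends on all edges already revealed inside $B_1\cup\cdots\cup B_{r-1}$ during the previous failed tests. That conditioning can kill the success probability outright. For example, condition on $B_1\cup\cdots\cup B_{r-1}$ being edgeless (an event of positive, if tiny, probability, which also forces all earlier tests to have failed): then for $r\geq 3$ \emph{no} choice of fresh edges incident to $B_r$ can produce a Hamilton path, since each of the $(r-1)K$ vertices of the edgeless part needs two path-neighbours in $B_r$, but $B_r$ has only $K$ vertices and thus at most $2K$ path-edge endpoints to offer. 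Hence $\Pr(A_r\mid\text{history})=0$ on such histories, and the stochastic domination of $R_j$ by a $\Geo(\alpha_p)$ variable fails. Your closing paragraph anticipates a related issue (a super-block swallowing all of $[n]$) but dismisses it as an $e^{-\Omega(n)}$ event, which does not address the fact that the \emph{conditional} extension probability needed for the geometric-tail and Bernstein arguments is not bounded below. Without that, neither $\E[R_j^2]=O(1)$ nor the q.s.\ concentration of $\sum_j R_j^2$ is established. The paper avoids this entirely: in its greedy path construction the relevant transition probabilities (the law of $L_{j+1}-L_j$ in \eqref{Lj5}) genuinely depend only on unexposed edges from $v_j$ to unvisited vertices, so the fresh-randomness claim is correct there. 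If you want to salvage a block-based proof, you would need either to weaken (a) to a condition checkable using only edges incident to the newest block (and then show how to still thread a Hamilton path through the unconditioned earlier blocks), or to bound $\Pr(R_j\geq r)$ directly without the false history-independence claim.
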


\begin{proof}
We first write $G=G_1\cup G_2\cup G_3$ where the $G_i$ are independent copies of $G_{n,p_1}$, where $1-p=(1-p_1)^3$. We will first construct a long path in $G_1$ via the following algorithm: We start with $v_1=1$. Then for $j\geq 1$ we let 
$$\f(j)=\min_{k\in [n]}\set{k:k\notin\set{v_1,v_2,\ldots,v_j}\text{ and }v_j\sim k}$$
and let $v_{j+1}=\f(j)$ i.e. we move from $v_j$ to the lowest indexed $k$ that has not been previously visited. We repeat this until we reach $j_0$ such that $\f(j_0)$ is undefined. This defines a path $P_1$ of length  $\Lambda_1=\sum_{j=1}^{j_0-1}|v_{j+1}-v_j|$. It is convenient to extend the sequence $v_1,\ldots,v_{j_0}$ by $v_{j_0+1},\ldots,v_{n}$ where the latter is $[n]\setminus\{v_1,\ldots,v_{j_0}\}$ in increasing order. Now think of $v_1,v_2,\ldots,v_{n}$ as a permutation of $[n]$. Then Lemma \ref{permutations} implies that the length $\Lambda_1$ of the initial part corresponding to the path is at most $\ell(v)<n+3\cdot\inv(v)$.

Observe that $\Pr(j_0\leq n-k)\leq n(1-p_1)^k$. This is because at $j_0$ we find that $v_{j_0}$ has no neighbors in the set of unvisited vertices and the existence of such edges is unconditioned at this point. So, 
\beq{b1}
j_0\leq n-\frac{\log^2n}{p_1}\ q.s.
\eeq

Now let $\a_j=|\set{i<j:v_i>v_j}|,j=1,2,\ldots,n$ so that $\inv(v)=\a_1+\a_2+\cdots+\a_n$. 
Let $L_j=\max\set{v_i:1\leq i\leq j}$. Then if $i<j$ and $v_i>v_j$ we must have $j\leq v_j<v_i\leq L_j$. So,
\beq{Lj1}
\a_j\leq \D_j=L_j-j.
\eeq
Furthermore, we will need
\beq{Lj2}
|v_{i+1}-v_i|\leq |v_{i+1}-(i+1)|+|v_i-i|+1\leq \D_{i+1}+\D_i+1\qquad\text{ for }1\leq i<j_0.
\eeq
It is important therefore to analyze the sequence $\D_j,1\leq j\leq j_0$. We observe that
\beq{Lj5}
\Pr(L_{j+1}=L_j+u)\ \ \begin{cases}=1-(1-p_1)^{\D_j}&u=0.\\=p_1(1-p_1)^{\D_j+u-1}&u>0.\end{cases}.
\eeq
Furthermore, these probabilities hold regardless of previous edge exposures. This is because edges incident with $v_j$ and vertices not on $P_1$ have not been exposed. 

It will follow from \eqref{Lj5} that
\begin{align}
&\D_j\leq \frac{\log^2n}{p_1},\,\forall j,\ q.s.\label{Lj4}\\
&\E\brac{\sum_{j=1}^{j_0}\D_j}\leq \frac{n}{p_1}\label{Lj3}.\\
&\sum_{j=1}^{j_0}\D_j\leq \frac{2n}{p_1},\ q.s.\label{Lj3a}
\end{align}
We will prove \eqref{Lj4}, \eqref{Lj3}, \eqref{Lj3a}  momentarily, but first let us use them to finish the proof of the lemma.

It follows from Lemma \ref{permutations}, \eqref{Lj1} and \eqref{Lj3} that
$$\E\La_1\leq A_1n,$$ 
where $A_1=1+\frac{3}{p_1}$.

It remains to show that there is a Hamilton cycle of length not much greater then $\La_1$.  

Let $J=\set{v_{j_0+1},\ldots,v_m}$. We will use the edges of $G_2$ to insert $J$ into the path $P_1$. Let $v_j\in J_0$. Assume that $v_jj\geq n/2$, the argument for $v_j<n/2$ is similar. We examine $k=v_j-1,v_j-2,\ldots$ in turn until we find a $k$ such that (i) $(v_j,v_j-k)\in E(G_2)$, $v_j-k=v_\ell\notin J$ and (ii) $(v_j,v_{\ell-1})\in E(G_2)$. We will find such a $k$ q.s. after examining at most $\log^2n$ possibilities. Using \eqref{Lj2} and \eqref{Lj4} we see that replacing the edge $(v_{\ell-1},v_\ell)$ by a path $v_{\ell-1},v_j,v_\ell$ q.s. incorporates $v_j$ into our path at a cost of at most $O\brac{\log^2n+\frac{\log^2n}{p_1}}$ and \eqref{b1} implies that there is room to insert all vertices in $J$ in this way, without using the same $v_\ell$ more than once. This gives us a Hamilton path $x_1,x_2,\dots,x_n$ in $G_1\cup G_2$ q.s. and the total added cost over the cost of $P_1$ is q.s. $O(\log^4n)$. There is only an exponentially small probability that we cannot find $G_3$-edges $\{x_1,x_{j+1}\}$, $\{x_j,x_n\}$ which now give us a Hamilton cycle; since the maximum value of of $Z$ is just $n^2$, this gives $\E(Z)\leq A_p n$, as desired.

{\bf Proof of \eqref{Lj4}:}
First of all we note that \eqref{Lj5} that
$$\Pr\brac{\exists j: L_{j+1}\geq L_j+\frac{\log^2n}{4p_1}}\leq (1-p_1)^{\log^2n/4p_1}\leq e^{-\log^2n/4}.$$
So if there exists $j$ with $\D_j\geq \frac{\log^2n}{p_1}$ then q.s. there must be $k$ such that $\D_k\in\interval{\frac{\log^2n}{2p_1}}{\frac{3\log^2n}{4p_1}}$. But then \eqref{Lj5} implies that with probability $1-O(e^{-\log^2n/2})$, $L_{k+r}=L_k$ for $r\leq n$ and this completes the proof of \eqref{Lj4}.

{\bf Proof of \eqref{Lj3}, \eqref{Lj3a}:} It follows from \eqref{Lj5} that the sum in \eqref{Lj3} is bounded by the sum of $n$ independent geometric random variables with success probability $p_1$. This gives both the bound on expectation and the q.s. bound.
\end{proof}
We have:
\begin{corollary}\label{cor1}
Suppose that we replace the length of edge $(i,j)$ in Lemma \ref{basic} by $\xi_{i}+\cdots+\xi_{j-1}$ where $\xi_1,\xi_2,\ldots,\xi_n$ are random variables with mean bounded above by $\m$ and exponential tails. If $\xi_1,\ldots,\xi_n$ are independent of $G_{n,p}$ then $\E(Z)\leq \frac{A_p\m n}{p}$. 
\end{corollary}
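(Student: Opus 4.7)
The plan is to re-use the specific Hamilton cycle $H^\star$ constructed in the proof of Lemma \ref{basic} via the three-stage process on $G_1\cup G_2\cup G_3$, and then simply bound its new-weight length by linearity. Because $H^\star$ is a function of $G_{n,p}$ alone and the $\xi_k$ are independent of $G_{n,p}$ by hypothesis, the evaluation decouples cleanly.

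The key step is to rewrite, for any Hamilton cycle $H$ with edges $\{i_\alpha,j_\alpha\}$ (with $i_\alpha<j_\alpha$), the new-weight length in a crossing-count form:
\[
\ell_\xi(H)\;=\;\sum_\alpha\sum_{k=i_\alpha}^{j_\alpha-1}\xi_k\;=\;\sum_{k=1}^{n-1} c_k(H)\,\xi_k,
\]
where $c_k(H)=|\{\alpha:i_\alpha\le k<j_\alpha\}|$ counts edges of $H$ that straddle the gap between positions $k$ and $k+1$. Note that $\sum_k c_k(H)$ equals exactly the old integer length $\ell_0(H)=\sum_\alpha(j_\alpha-i_\alpha)$. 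Since each $\xi_k$ is independent of $H^\star$, conditioning on $G_{n,p}$ and using $\E\xi_k\le\mu$ gives
\[
\E\bigl[\ell_\xi(H^\star)\,\big|\,G_{n,p}\bigr]\;=\;\sum_k c_k(H^\star)\,\E\xi_k\;\le\;\mu\,\ell_0(H^\star),
\]
and taking the unconditional expectation, combined with $\E\ell_0(H^\star)\le A_p n$ from Lemma \ref{basic}, gives $\E\ell_\xi(H^\star)\le A_p\mu n$.

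Since $Z\le\ell_\xi(H^\star)$ whenever the Lemma \ref{basic} construction succeeds, this suffices except on the exceptional event $\cE$ that the construction fails. That event has probability exponentially small in $n$, so under the convention $Z=n^2$ on $\cE$ its contribution to $\E Z$ is $n^2\cdot e^{-\Omega(n)}=o(1)$, which is readily absorbed into the constant. The one spot that needs attention is the independence step that splits $\E[c_k(H^\star)\xi_k]=\E[c_k(H^\star)]\,\E\xi_k$; this is immediate from the hypothesis that $(\xi_k)$ is independent of $G_{n,p}$, and notably no independence among the $\xi_k$ themselves is needed (only the mean bound and exponential tails, the latter to control the $\cE$ contribution robustly in case of any alternative convention for $Z$). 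The slack factor $1/p$ in the stated bound leaves ample room for this bookkeeping.
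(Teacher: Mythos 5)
Your proof is correct and takes essentially the same approach as the paper's own (one-line) argument: apply linearity of expectation to the cycle built in Lemma~\ref{basic}, using that the cycle depends only on $G_{n,p}$ and is therefore independent of the weights $\xi_k$. The crossing-count rewriting $\ell_\xi(H)=\sum_k c_k(H)\xi_k$ with $\sum_k c_k(H)=\ell_0(H)$ just makes the linearity and the decoupling fully explicit; as you note, this actually yields the sharper bound $\E(Z)\leq A_p\mu n$, with the stated $1/p$ factor being harmless slack.
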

\begin{proof}
The bound on the expectation follows directly from Lemma \ref{basic} and the linearity of expectation. 
\end{proof}

Let us observe now that we get an upper bound $\E(T(\gy{1}{t}{p}))\leq A_p t$ on the length of a tour in 1 dimension. We have 
\[
\E(T(\gy{1}{t}{p}))=\sum_{n=0}^{\infty}\Pr(|\gy{1}{t}{p}|=n)\E\left(\gy{1}{t}{p}\middle| |\gy{1}{t}{p}|=n\right).
\]
When conditioning on $|\gy{1}{t}{p}|=n$, we let $p_1<p_2<\cdots<p_n\sbs [0,t]$ be the points in $\gy{1}{t}{p}$.  We choose $k\in \{0,n-1\}$ uniformly randomly and let $\xi_i=||p_{k+i+1}-p_{k+i}||$, where the indices of the $p_j$ are evaluated modulo $n$.  We now have $\mu(\xi_i)\leq \frac{2t}{n}$ for all $i$, and Corollary \ref{cor1} gives that 
\[
\E\left(\gy{1}{t}{p}\middle| |\gy{1}{t}{p}|=n\right)\leq \frac{A_pn}{p}\cdot \frac{2t}{n}=O(t),
\]
and thus 
\begin{equation}
\label{d1bound}
\E\left(\gy{1}{t}{p}\right)\leq A_p t.
\end{equation}

\subsection{The asymptotic tour length}
\label{pgtsp}
Our proof of Theorem \ref{gtsp} will use recursion, by dividing the $[t]^d$ cube into smaller parts.  However, since our divisions of the cube most not cross boundaries of the elemental regions $\yd 1$, we cannot restrict ourselves to subdivisions into perfect cubes (in general, the integer $t$ may not have the divisors we like).

To this end, if $L=T_1\times T_2\times \cdots \times T_d$ where each $T_i$ is either $[0,t]$ or $[0,t-1]$, we say $L$ is a $d$-dimensional \emph{near-cube} with sidelengths in $\{t-1,t\}$.  For $0\leq d'\leq d$, we define the canonical example $L_d^{d'}:=[0,t]^{d'}\times [0,t-1]^{d-d'}$ for notational convenience, and let
\[
\Phi_p^{d,d'}(t)=\E\left(T(\gyp\cap L_d^{d'})\right).
\]
so that
$$\Phi_p^{d}(t):=\Phi_p^{d,d}(t)=\Phi_p^{d,0}(t+1).$$

In the unlikely event that $\gyp\cap L_d^{d'}$ is not Hamiltonian, we take $T(\gyp\cap L_d^{d'})= t^{d+1}\sqrt d$, for technical reasons.

Our first goal is an asymptotic formula for $\F$:

\begin{lemma}\label{l.Fasym}
There exists $\b_p>0$ such that 
\[\F^{d,d'}_p(t)\sim \beta_p t^d.\]
\end{lemma}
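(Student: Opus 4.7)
The proof is a subadditivity argument in the spirit of Beardwood-Halton-Hammersley, adapted to handle the random-edge constraint. I would first prove convergence for $d' = d$ via tiling, then deduce the same limit $\beta_p$ for all $d'$ by a thin-boundary comparison, and finally verify positivity of $\beta_p$ by a volumetric lower bound.

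\textbf{Boundedness and subadditivity.} A preliminary bound $\Phi_p^{d,d}(t) = O(t^d)$ follows by splitting $[0,t]^d$ into $t$ slabs of thickness $1$, touring each at cost $O(t^{d-1})$ via the inductive hypothesis of Theorem \ref{gtsp} in dimension $d-1$ (base case \eqref{d1bound}), and snake-stitching adjacent slabs through $\gyp$-edges. These crossing edges exist quite surely since constant $p$ yields $\Omega(t^{d-1})$ candidate edges per pair of adjacent slabs, and the penalty $t^{d+1}\sqrt d$ on the rare failure event contributes $o(1)$ in expectation via Observation \ref{o.Ychernoff}. For the core subadditive step, fix integers $s < t$ and write $t = ms + r$ with $0 \le r < s$. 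Tile $[0,ms]^d$ by $m^d$ disjoint translates of $[0,s]^d$; by independence, each carries a copy of $\gyp \cap [0,s]^d$, and the expected minimum Hamilton cycle on it has length $\Phi_p^{d,d}(s)$. Tour the residual region (volume $O(t^{d-1} s)$) at cost $O(t^{d-1} s)$ using the thin-slab construction, and merge the $m^d + O(1)$ subtours by snake-traversing the subcubes: in each subtour break one edge and insert a crossing edge of length $O(s)$ to the next subcube. These crossings are quite surely available, since $\Omega(s^{2d} p)$ independent candidate edges between adjacent subcubes include many short ones. Assembling,
\[
\Phi_p^{d,d}(t) \le m^d \Phi_p^{d,d}(s) + O(m^d s) + O(t^{d-1} s).
\]

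\textbf{Convergence, positivity, extension to $d' < d$.} Dividing by $t^d$, using $m^d s^d / t^d \to 1$ as $t \to \infty$ with $s$ fixed, and letting $t \to \infty$ gives
\[
\limsup_{t \to \infty} \Phi_p^{d,d}(t)/t^d \le \Phi_p^{d,d}(s)/s^d + O(s^{-(d-1)}).
\]
Passing to $\liminf_s$ on the right then yields $\limsup_t \le \liminf_s \Phi_p^{d,d}(s)/s^d$, so the limit $\beta_p := \lim_t \Phi_p^{d,d}(t)/t^d$ exists. Positivity $\beta_p > 0$ follows from the classical volumetric lower bound: $|\ydt|$ concentrates around $\mu t^d$ by Observation \ref{o.Ychernoff}, and any tour through $n$ points in $[0,t]^d$ has total length at least $c_d n^{(d-1)/d} t$ by a nearest-neighbor ball-packing argument, whence $\Phi_p^{d,d}(t) \ge c'_d \mu^{(d-1)/d} t^d$. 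For $d' < d$, the near-cubes $L_d^{d'}$ and $L_d^d$ differ only in $d - d' = O(1)$ boundary slabs of volume $O(t^{d-1})$, which can be toured at cost $O(t^{d-1})$ by the $d-1$ dimensional inductive hypothesis; this forces $|\Phi_p^{d,d'}(t) - \Phi_p^{d,d}(t)| = O(t^{d-1})$, so the same $\beta_p$ applies for every $d'$.

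\textbf{Main obstacle.} The key difficulty is the stitching: unlike classical BHH, where crossings between subcubes are straight-line segments incurred for free, here they must be honest $\gyp$-edges, and the optimal subtours must be edited to incorporate them while retaining a valid Hamilton cycle. The crucial facilitator is that edges between distinct subcubes are independent of those inside them, so conditioning on an expected-minimum-length Hamilton cycle within each subcube leaves the crossing candidates with unchanged distribution; constant $p$ then guarantees that short crossings are quite surely available, and the rare configurations where stitching fails are absorbed by the penalty $t^{d+1}\sqrt d$.
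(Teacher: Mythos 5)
Your plan follows the classical BHH recipe: fix $s$, tile $[0,t]^d$ by $m^d\approx(t/s)^d$ copies of $[0,s]^d$, stitch the subtours, divide by $t^d$, and let $t\to\infty$ with $s$ fixed. The critical step that fails is the stitching. With $s$ fixed, each of the $m^d$ patching operations fails with a \emph{constant} probability $\delta = e^{-\Omega(s^{2d}p)}>0$ (you cannot call this ``quite surely'' small since it does not shrink with $t$), and the failure events for distinct pairs of adjacent subcubes are essentially independent. So the probability that at least one stitch fails is $1-(1-\delta)^{\Theta((t/s)^d)}\to 1$ as $t\to\infty$. When stitching fails you fall back on the penalty $t^{d+1}\sqrt d$, and the expected contribution of failure is then $\Theta(t^{d+1})$, which destroys the inequality $\Phi^{d,d}_p(t)\le m^d\Phi^{d,d}_p(s)+O(m^ds)+O(t^{d-1}s)$. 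Consequently the passage to $\limsup_t\le\liminf_s$ is not justified, and the existence of the limit is left open.

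The paper's proof sidesteps exactly this difficulty. Lemma \ref{lem1} only asserts $\Phi^d_p(mu)\le m^d(\Phi^d_p(u)+B_du)$ under the restriction $1\le m\le u^{10}$, precisely so that the $m^d$ patching failure probabilities, each $e^{-\Omega(u^dp)}$, remain negligible even after a union bound over $m^d\le u^{10d}$ pairs. Because of this constraint, one cannot fix the small cube size and send $m\to\infty$; instead the paper sets $\b=\liminf_t\Phi^d_p(t)/t^d$, picks $u_0$ near the liminf, defines $u_{k+1}=u_k^{10}$, and proves inductively (using Lemma \ref{eq10} to interpolate between multiples of $u_i$) that $\Phi^d_p(t)/t^d\le\b+2\e$ throughout the intervals $J_i=[u_{i-1}u_i,u_i(u_{i+1}+1)]$, whose union covers all large $t$. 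The cumulative error is a convergent geometric-type series $\sum_j(A_{p,d}/u_j+B_{p,d}/u_j^{d-1})$, controllable precisely because $u_j$ grows doubly exponentially. Your proof proposal is missing this mechanism entirely, and a direct BHH-style argument with fixed $s$ cannot be patched without it (or something equivalent, such as letting $s$ grow with $t$ along a carefully chosen sequence). Your extension to $d'<d$ and your positivity argument are fine in spirit and match the paper, though the ``nearest-neighbor ball-packing'' bound is stated a bit loosely; the paper's count of occupied unit subcubes is cleaner.
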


The proof is by induction on $d\geq 2$.  We prove the base case $d=2$ along with the general case. We begin with a technical lemma.
\begin{lemma}\label{eq10}
There is a constant $F_{p,d}>0$ such that 
\beq{Apd}
\F^{d,d'}_p(t)\leq \F^{d,d'-1}_p(t)+F_{p,d}t^{d-1}
\eeq
for all $t$ sufficiently large.  In particular, there is a constant $A_{p,d}>0$ such that 
\beq{eq100}
\F^d_p(t+h)\leq \F^d_p(t)+A_{p,d} h t^{d-1}
\eeq 
for sufficiently large $t$ and $1\leq h\leq t$.
\end{lemma}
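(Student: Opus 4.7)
My plan is to prove the first inequality by explicitly constructing a Hamilton cycle of $\gyp \cap L_d^{d'}$ from one of $\gyp \cap L_d^{d'-1}$ at additional expected cost $O(t^{d-1})$; the second inequality will then follow by iteration.

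To prove the first inequality, I decompose $L_d^{d'} = L_d^{d'-1} \cup S$, where $S = [0,t]^{d'-1} \times [t-1, t] \times [0, t-1]^{d - d'}$ is a slab of unit thickness in the $d'$-th coordinate direction, with volume $t^{d'-1}(t-1)^{d-d'} = O(t^{d-1})$. I build a Hamilton cycle of $\gyp \cap L_d^{d'}$ from three ingredients: first, an optimal Hamilton cycle $H_1$ of $\gyp \cap L_d^{d'-1}$ of expected length $\F^{d,d'-1}_p(t)$; second, a Hamilton cycle $H_2$ of $\gyp \cap S$ of expected length $O(t^{d-1})$; and third, a splicing of $H_1$ and $H_2$ into a single Hamilton cycle at additional cost $O(1)$. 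Summing these expected costs gives $\F^{d,d'}_p(t) \leq \F^{d,d'-1}_p(t) + F_{p,d} t^{d-1}$.

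The main obstacle is producing the short Hamilton cycle $H_2$ of $\gyp \cap S$.  The slab $S$ is naturally partitioned into $\Theta(t^{d-1})$ unit cubes corresponding to the elemental copies of $\yd 1$ making up $\gyp \cap S$, and these cubes sit on a $(d-1)$-dimensional integer grid. I will form $H_2$ by snaking through this grid so that consecutive cubes on the snake are adjacent. To ensure the required edges of $\gyp$ exist, I group consecutive cubes on the snake into blocks of some large constant number $M$ of cubes; within each block the induced subgraph is stochastically close to a binomial random graph on $\Theta(M)$ vertices with edge probability $p$, which a.a.s. admits a Hamilton path, and each edge of this path has length $O(\sqrt d M^{1/(d-1)})$ since all vertices sit inside a cluster of $O(M)$ adjacent unit cubes.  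Between adjacent blocks, a connecting edge of $\gyp$ of $O(1)$ length exists with probability tending to 1 as $M \to \infty$, using exponential tail bounds on the block populations from Observation~\ref{o.Ychernoff}. A standard patching argument, using constant $p$ to reroute through a nearby block, handles the rare failed blocks at negligible additional cost.  With $O(t^{d-1}/M)$ blocks each contributing length $O(M)$, we obtain $\E|H_2| = O(t^{d-1})$. The splicing step is routine: since $p$ is a positive constant, we can find an edge $\{a,b\} \in H_1$ and an edge $\{a',b'\}\in H_2$ with both endpoints near the common boundary $\{x_{d'} = t-1\}$ and such that $\{a,a'\}$ and $\{b,b'\}$ are edges of $\gyp$ of $O(1)$ length; rewiring then merges the two cycles.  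The event that any step fails occurs with probability $O(t^{-\omega(1)})$, so the $t^{d+1}\sqrt d$ penalty incurred when $\gyp\cap L_d^{d'}$ is non-Hamiltonian contributes only $o(1)$ to the expectation.

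For the second inequality, I iterate the first.  Applying \eqref{Apd} for $d' = d, d-1, \ldots, 1$ telescopes to $\F^{d,d}_p(t) \leq \F^{d,0}_p(t) + d F_{p,d} t^{d-1}$. Since $\F^d_p(t) = \F^{d,d}_p(t)$ and $\F^{d,0}_p(t) = \F^d_p(t-1)$, this gives the one-step bound $\F^d_p(t) \leq \F^d_p(t-1) + d F_{p,d} t^{d-1}$.  Summing this bound from $t$ to $t+h$ and using $(t+i)^{d-1} \leq (2t)^{d-1}$ for $1 \leq i \leq h \leq t$ yields the claim with $A_{p,d} = 2^{d-1} d F_{p,d}$.
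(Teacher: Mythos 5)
Your overall strategy matches the paper's skeleton: decompose $L_d^{d'}=L_d^{d'-1}\cup S$, tour $S$ at expected cost $O(t^{d-1})$, splice, and then telescope (8) from (7). The telescoping and the handling of the non-Hamiltonicity penalty are both correct and essentially identical to the paper's.

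Where you genuinely diverge is in bounding the expected tour length through the slab $S$. The paper projects $S$ onto $(d-1)$ dimensions and invokes the already-established Lemma~\ref{l.Fasym} for dimension $d-1$ (falling back to the separate 1-dimensional argument of Section~\ref{s.d1} when $d=2$), then observes that unprojecting costs at most an additive $O(|V(S)|)=O(t^{d-1})$. You instead build the slab tour directly by a snake-through-blocks construction, with no appeal to lower-dimensional results. This is a more self-contained route: it dissolves the inductive dependency (and the need for the 1-dimensional base case in this lemma), at the cost of a more elaborate construction and patching argument. Both routes are sound. One caution on yours: several of the length bounds are slightly overclaimed. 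A block of $M$ consecutive cubes along the snake can be elongated, so intra-block edges and block-to-block connecting edges have length $O(M)$ (or $O(M^{1/(d-1)})$ if you take blocks to be small sub-cubes of the grid rather than runs of the snake), not $O(1)$; similarly the final splice between $H_1$ and $H_2$ cannot in general be done with $O(1)$-length edges since the chosen edge of $H_1$ need not lie near the interface, so an $O(t)$ splicing cost (as in the paper) is the honest bound. None of this affects the conclusion, since $M$ is a fixed constant and $O(t)=O(t^{d-1})$ for $d\geq 2$, but the intermediate claims as written are not quite right.
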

\begin{proof}

We let $S$ denote the subgraph of $\gy d {t} p\cap L_d^{d'}$ induced by the difference $L_d^{d'}\setminus L_d^{d'-1}$.

By ignoring the $d'$th coordinate, we obtain the $(d-1)$ dimensional set $\pi(S)$, for which induction on $d$ (or line \eqref{d1bound} if $d=2$) implies an expected tour $T(S)$ of length $\F^{d-1,d'-1}_p(t)\leq \beta^{d-1}_p t^{d-1}$,
and so 
\[
\F^{d-1,d'-1}_p(t)\leq D_{p,d-1} t^{d-1}
\]
for some constant $D_{p,d-1}$, for sufficiently large $t$.

We have that
\[
\E(T(S))\leq \E(T(\pi(S))+d^{1/2}\E(|V(S)|)\leq D_{p,d-1}t^{d-1}+d^{1/2}t^{d-1}.
\]
The first inequality stems from the fact that the points in $L_d^{d'}\setminus L_d^{d'-1}$ have a $d'$ coordinate in $[t-1,t]$.

Now if $\gy d {t} p\cap L_d^{d'-1}$ and $S$ are both Hamiltonian, then we have
\beq{decompose}
T(\gy d {t} p\cap L_d^{d'})\leq T(\gy d {t} p\cap L_d^{d'-1})+T(S)+O_d(t)
\eeq
which gives us the Lemma, by linearity of expectation.  We have \eqref{decompose} because we can patch together the minimum cost Hamilton cycle in $\gy d {t} p\cap L_d^{d'-1}$ and the minimum cost path $P$ in $S$ as follows:   Let $u_1,v_1$ be the endpoints of $P$. If there is an edge $u,v$ of $H$ such that $(u_1,u),(v_1,v)$ is an edge in $\gy{d}{t}{p}$ then we can create a cycle $H_1$ through $\gy d {t} p\cap L_d^{d'-1}\cup P$ at an extra cost of at most $2d^{1/2}t$. The probability there is no such edge is at most $(1-p^2)^{t/2}$, which is negligible given the maximum value of $T(\gy d {t} p\cap L_d^{d'})$.

On the other hand, the probability that either of $\gy d {t} p\cap L_d^{d'-1}$ or $S$ is not Hamiltonian is exponentially small in $t$, which is again negligible given the maximum value of $T(\gy d {t} p\cap L_d^{d'})$.
\end{proof}
Our argument is an adaptation of that in Beardwood, Halton and Hammersley \cite{BHH} or  Steele \cite{S}, with modifications to address difficulties introduced by the random set of available edges. First we introduce the concept of a decomposition into near-cubes. (Allowing near-cube decompositions is necessary for the end of the proof, beginning with Lemma \ref{cruder}).

We say that a partition of $L_d^{d'}$ into $m^d$ near-cubes $S_\a$ with sidelengths in $\{u,u+1\}$ indexed by $\a\in [m]^d$  is a \emph{decomposition} if for each $1\leq b\leq d$, there is an integer $M_b$ such that, letting 
\[
f_b(a)=\begin{cases}
a\cdot u \mbox{ if } a<M_b\\
a\cdot u+(a-M_b) \mbox { if } a\geq M_b.
\end{cases}.
\]
we have that 
\[
S_\a=[f_1(\a_1-1),f_1(\a_1)]\times [f_2(\a_2-1),f_2(\a_2)]\times \cdots \times [f_d(\a_d-1),f_d(\a_d)].
\]
Observe that so long as $u< t^{1/2}$, $L_d^{d'}$ always has a decomposition into near-cubes with sidelengths in $\{u,u+1\}$.

First we note that tours in not-too-small near-cubes of a decomposition can be pasted together into a large tour at a reasonable cost:
\begin{lemma}
\label{paste}
Fix $\d>0$, and suppose $t=mu$ for $u=t^\g$ for $\d<\g\leq 1$ ($m,u\in \Z$), and suppose $S_\a$ $(\a\in [m]^d)$ is a decomposition of $L_d^{d'}.$  We let $\cY^{d,\a}_{t,p}:=\cY^d_{t,p}\cap S_\a$.  We have
\[
T(\gyp\cap L_d^{d'})\leq \sum_{\a\in [m]^d}T(\gypa)+4m^du\sqrt d\qquad\mbox{with probability at least}\quad 1-e^{-\Omega(u^d p)}.
\]
\end{lemma}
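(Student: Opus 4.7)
The plan is to build a Hamilton tour in $\gyp \cap L_d^{d'}$ by first taking the optimal Hamilton cycle $C_\alpha$ in each sub-near-cube $\gypa$ and then iteratively merging the $m^d$ cycles into one, using a spanning tree $\cT$ on the grid adjacency graph of $\{S_\alpha\}_{\alpha\in[m]^d}$. Each of the $m^d-1$ merges combines two current cycles via a standard 2-edge swap across the face shared by a pair of adjacent near-cubes.

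First I check that every $C_\alpha$ is a genuine Hamilton cycle. Observation \ref{o.Ychernoff} gives that $|V_\alpha| = \Theta(\mu u^d)$ except with probability $e^{-\Omega(u^d)}$, and since $p$ is constant the classical Hamiltonicity theorem for $G_{n,p}$ then yields that each $\gypa$ is Hamiltonian except with probability $e^{-\Omega(u^d)}$. Union-bounding over the $m^d \le t^d$ near-cubes shows that with probability $1 - e^{-\Omega(u^d)}$ all the $C_\alpha$ exist as advertised.

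For the merges, choose $\cT$ to have maximum degree $O(d)$ (a lexicographic spanning tree of the $[m]^d$ lattice suffices). When processing a tree edge $\{\alpha,\beta\}$, let $C$ be the current cycle containing $V_\alpha$ and $C'$ the current cycle containing $V_\beta$; these are distinct since $\cT$ is a tree. I seek edges $\{a,a'\}$ of $C$ with $a,a'\in V_\alpha$ and $\{b,b'\}$ of $C'$ with $b,b'\in V_\beta$ such that both cross edges $\{a,b\}$ and $\{a',b'\}$ lie in $\gyp$; replacing the picked edges with the cross edges fuses $C$ and $C'$ into a single Hamilton cycle on $V(C)\cup V(C')$. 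Since the two cross edges lie in the union $S_\alpha\cup S_\beta$ of adjacent near-cubes of diameter at most $(u+1)\sqrt{d+3}$, their combined Euclidean length is at most $4\sqrt d\,u$ for $u$ sufficiently large, and this is the additive cost of the merge. Moreover each near-cube is entered/exited by the current cycle at most $2\deg_\cT(\alpha)=O(d)$ times across all prior merges, so $C\cap S_\alpha$ contains $|V_\alpha|-O(d)=\Omega(u^d)$ edges with both endpoints in $V_\alpha$, and similarly for $C'\cap S_\beta$.

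To produce the valid patch with high probability, I condition on all within-near-cube edges and on the cross edges used in previous merges; this determines $C$ and $C'$ while leaving the cross edges between $V_\alpha$ and $V_\beta$ as independent $p$-edges. Taking $N=\Omega(u^d)$ vertex-disjoint (within-$\alpha$ edge, within-$\beta$ edge) candidate pairs makes the corresponding patch attempts independent, each succeeding with probability $\ge p^2$, so a single merge fails with probability at most $(1-p^2)^N=e^{-\Omega(u^d p)}$ for constant $p$. Union-bounding over the $m^d-1$ merges absorbs the polynomial factor since $u^d p$ grows like $t^{\gamma d}$ while $\log m^d=O(\log t)$. On the success event, the final cycle has length at most $\sum_\alpha T(\gypa)+(m^d-1)\cdot 4\sqrt d\,u\le\sum_\alpha T(\gypa)+4m^d u\sqrt d$. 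The main technical obstacle is keeping the conditioning clean---in particular ensuring that the cross edges used at each merge remain independent of the previously conditioned information, and that the running ``entry-exit count'' in each cube stays bounded so that $\Omega(u^d)$ within-cube edges remain available throughout the merging process.
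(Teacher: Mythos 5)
Your proposal is correct and follows essentially the same cut-and-paste strategy as the paper: build a Hamilton cycle in each subcube, then iteratively merge pairs of cycles by a two-edge swap, bounding the per-merge failure probability by $(1-p^2)^{\Omega(u^d)}=e^{-\Omega(u^d p)}$ and the per-merge cost by $O(u\sqrt{d})$. The only cosmetic difference is that the paper merges along a Hamiltonian snake ordering $T_1,\dots,T_{m^d}$ of the subcube grid (so each cycle $\tau(T_i)$ trivially loses at most two edges), whereas you merge along a bounded-degree spanning tree of the grid adjacency graph; your extra bookkeeping about entry-exit counts and disjoint candidate pairs is a clean way of making the conditioning precise, but both orderings yield the same bound.
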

\begin{proof}
Let $\cB,\cC$ denote the events 
\begin{align*}
\cB&=\set{\exists \a:\cY^{d,\a}_{t,p}\text{ is not Hamiltonian}}\\
\cC&=\set{\exists \a:\left||\cY^{d,\a}_{t}|-u^d\right |\geq \d u^d},
\end{align*}
and let $\cE=\cB\cup\cC$.

Now $\Pr(\cB)\leq m^de^{-\Omega(u^dp)}$ and, by Observation \ref{o.Ychernoff}, $\Pr(\cC)\leq m^de^{-\Omega(u^d)}$ and so $\Pr(\cE)\leq e^{-\Omega(u^dp)}$. Assume therefore that $\neg\cE$ holds. Each subsquare $S_\a$ will contain a minimum length tour $H_\a$.  We now order the subcubes $\{S_\a\}$ as $T_1,\ldots,T_{m^d}$, such that for $S_\a=T_i$ and $S_\b=T_{i+1}$, we always have that the Hamming distance between $\a$ and $\b$ is 1.  Our goal is to inductively assemble a tour through the subcubes $T_1,T_2,\dots,T_j$ from the smaller tours $H_\a$ with a small number of additions and deletions of edges.

Assume inductively that for some $1\leq j<m^d$ we have added and deleted edges and found a single cycle $C_j$ through the points in $T_1,\ldots,T_j$ in such a way that (i) the added edges have total length at most $4\sqrt d ju$ and (ii) we delete one edge from $\t(T_1)$, $\t(T_j)$ and two edges from each $\t(T_i),2\leq i\leq j-1$. To add the points of $T_{j+1}$ to create $C_{j+1}$ we delete one edge $(u,v)$ of $\t(T_j)\cap C_j$ and one edge $(x,y)$ of $\t(T_{j+1})$ such that both edges $\{u,x\},\{v,y\}$ are in the edge set of $\gyp$. Such a pair of edges will satisfy (i) and (ii) and the probability we cannot find such a pair is at most $(1-p^2)^{(u^d/2-1)u^d/2}$. Thus with probability at least $1-e^{\Omega(u^d p)}$ we build the cycle $C_{m^d}$ with a total length of added edges $\leq 4\sqrt d m^d u$.
\end{proof}

Linearity of expectation (and the polynomial upper bound $t^{d+1}\sqrt d$ on $T(\gyp)$) now gives a short-range recursive bound on $\F^d_p(t)$ when $t$ factors reasonably well:
\begin{lemma}\label{lem1} For all large $u$ and $1\leq m\leq u^{10}$ $(m,u\in \N)$,
$$\Phi^d_p(mu)\leq m^d(\Phi^d_p(u)+B_{d}u)$$
for some constant $B_{d}.$\qed
\end{lemma}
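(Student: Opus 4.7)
The plan is to apply Lemma~\ref{paste} directly to a decomposition of $L_d^d = [0,mu]^d$ into $m^d$ subcubes of side $u$, then take expectations. Since $mu$ is an exact multiple of $u$, this is a genuine cube decomposition (no ``near'' needed), and the hypothesis of Lemma~\ref{paste} that $u = t^\gamma$ for some $\gamma > \delta > 0$ is satisfied: with $t = mu \leq u \cdot u^{10} = u^{11}$, we have $u \geq t^{1/11}$, so we may fix any $\delta < 1/11$ once and for all.

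Write $\cE$ for the ``good'' event of Lemma~\ref{paste}, which has probability at least $1 - e^{-\Omega(u^d p)}$. On $\cE$ we get the deterministic bound
\[
T(\cY^d_{mu,p}\cap L_d^d) \;\leq\; \sum_{\a\in[m]^d} T(\cY^{d,\a}_{mu,p}) + 4m^d u\sqrt{d}.
\]
Each $\cY^{d,\a}_{mu,p}$ has exactly the distribution of $\cY^d_{u,p}\cap L_d^d$ (the ground process is translation-invariant by construction, being built from independent copies of $\cY^d_1+x$), so linearity of expectation gives
\[
\E\!\left[\mathbf{1}_{\cE}\cdot T(\cY^d_{mu,p}\cap L_d^d)\right] \;\leq\; m^d \F^d_p(u) + 4 m^d u\sqrt{d}.
\]

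For the complementary event, I use the convention stated just before Lemma~\ref{l.Fasym}: on failure of Hamiltonicity, $T$ is set to $t^{d+1}\sqrt{d}$; moreover $T$ is in any case at most this bound because the ambient cube has diameter $\sqrt{d}\,t$ and there are at most $O(t^d)$ points. Using $m\leq u^{10}$, this maximum is at most $(mu)^{d+1}\sqrt d \leq u^{11(d+1)}\sqrt d$, which is polynomial in $u$. Combined with $\Pr(\neg\cE)\leq e^{-\Omega(u^d p)}$, the contribution from the bad event is $o(1)$ as $u \to \infty$, and in particular is at most $m^d u$ for all sufficiently large $u$ (very generously).

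Combining the two estimates,
\[
\F^d_p(mu) \;\leq\; m^d \F^d_p(u) + 4m^d u\sqrt{d} + o(1) \;\leq\; m^d\bigl(\F^d_p(u) + B_d u\bigr),
\]
with $B_d := 5\sqrt{d}$ (say) for $u$ large enough. There is no real obstacle here; the only thing to watch is that the polynomial blow-up of $T$ on the failure event does not overwhelm the exponential smallness of $\Pr(\neg\cE)$, which is immediate because $m\leq u^{10}$ makes the blow-up only polynomial in $u$ while $\Pr(\neg\cE)$ is superpolynomially small in $u$.
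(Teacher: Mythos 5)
Your proof is correct and is essentially the paper's intended argument (the paper states this lemma with \qed, pointing only to Lemma~\ref{paste}, linearity of expectation, and the polynomial cap on $T$ to control the rare bad event, together with the observation that an exact cube decomposition of $[0,mu]^d$ suffices here). The one small imprecision --- the aside that ``there are at most $O(t^d)$ points'' --- is not literally true since $|\cY^d_{t,p}|$ is an unbounded random variable, but this is easily repaired via the exponential tail from Observation~\ref{o.Ychernoff} (e.g.\ by truncating or Cauchy--Schwarz) and does not affect the conclusion.
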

\noindent Note that here we are using a decomposition of $[mu]^d$ into $m^d$ subcubes with sidelength $u$; near-cubes are not required.

To get an asymptotic expression for $\F^d_p(t)$ we now let 
$$\b=\liminf_t\frac{\F^d_p(t)}{t^d}.$$
Choose $u_0$ large and such that 
$$\frac{\F^d_p(u_0)}{u_0^d}\leq \b+\e$$ 
and then define the sequence $u_k,k\geq -1$ by $u_{-1}=u_0$ and $u_{k+1}=u_k^{10}$ for $k\geq 0$. 
Assume inductively that for some $i\geq 0$ that
\begin{equation}\label{e.inductively}
\frac{\F^d_p(u_i)}{u_i^d}\leq\b+\e+\sum_{j=-1}^{i-2}\left(\frac{A_{p,d}}{u_j}+\frac{B_{p,d}}{u_j^{d-1}}\right).
\end{equation}
This is true for $i=0$, and then for $i\geq 0$ and $0\leq u\leq u_i$ and $d\leq m\in [u_{i-1},u_{i+1}]$ we have
\begin{align}
\frac{\F^d_p(mu_i+u)}{(mu_i+u)^d}&\leq \frac{\F^d_p(mu_i)+A_{p,d}u(mu_i)^{d-1}}{(mu_i)^d}\nonumber\\
&\leq \frac{m^d(\F^d_p(u_i)+B_{p,d}u_i)+A_{p,d}u(mu_i)^{d-1}}{(mu_i)^d}\nonumber\\
&\leq \b+\e+\sum_{j=-1}^{i-2}\left(\frac{A_{p,d}}{u_j}+\frac{B_{p,d}}{u_j^{d-1}}\right) +\frac{B_{p,d}}{u_i^{d-1}}+\frac{A_{p,d}}{m}\nonumber\\
&\leq \b+\e+\sum_{j=-1}^{i-1}\left(\frac{A_{p,d}}{u_j}+ \frac{B_{p,d}}{u_j^{d-1}}\right).\label{zxc}
\end{align}
Putting $m=u_{i+1}/u_i$ and $u=0$ into \eqref{zxc} completes the induction. We deduce from \eqref{e.inductively} and \eqref{zxc} that for $i\geq 0$ we have
\beq{zxcv}
\frac{\F^d_p(t)}{t^d}\leq \b+\e+\sum_{j=-1}^{\infty}\brac{\frac{A_{p,d}}{u_j}+\frac{B_{p,d}}{u_j^{d-1}}}\leq \b+2\e\qquad\text{ for } t\in J_i=[u_{i-1}u_i,u_i(u_{i+1}+1)]
\eeq
Now $\bigcup_{i=0}^\infty J_i=[u_0^2,\infty]$ and since $\e$ is arbitrary, we deduce that
\beq{beta}
\b=\lim_{t\to\infty}\frac{\F^d_p(t)}{t^d},
\eeq
We can conclude that 
\[
\F^d_p(t)\sim \b t^d,
\]
which, together with Lemma \ref{eq10}, completes the proof of Lemma \ref{l.Fasym}, once we show that $\b>0$ in \eqref{beta}.  To this end, we let $\rho$ denote $\Pr(|\yd 1|\geq 1)$, so that $\E(|\ydt|)\geq\rho t^d$.  We say $x\in \{0,\dots,t-1\}^d$ is \emph{occupied} if there is a point in the copy $\yd 1+x$.   Observing that a unit cube $[0,1]^d+x$ $(x\in \{0,\dots,t-1\}^d)$  is at distance at least 1 from all but $3^d-1$ other cubes $[0,1]^d+y$, we certainly have that the minimum tour length through $\ydt$ is at least $\frac{\cO}{3^d-1}$, where where $\cO$ is the number of occupied $x$.  Linearity of expectation now gives that $\b>\rho/(3^d-1)$, completing the proof of Lemma \ref{l.Fasym}.

\bigskip

Before continuing, we prove the following much cruder version of Part (ii) of Theorem \ref{gtsp}:
\begin{lemma}\label{cruder}
For any fixed $\e>0$, $T(\gyp)\leq t^{d+\e}$ q.s.
\end{lemma}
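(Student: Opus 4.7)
The plan is to decompose $[0,t]^d$ into very small sub-cubes, find a Hamilton cycle inside each one using the trivial diameter bound, and paste them together with Lemma \ref{paste}. Set $u=\lfloor t^{\e/2}\rfloor$ and take a near-cube decomposition of $[0,t]^d$ into $m^d=(t/u)^d(1+o(1))$ near-cubes $S_\a$ of sidelengths in $\{u,u+1\}$. Note that $u=t^\g$ with $\g=\e/2$, so the hypothesis of Lemma \ref{paste} is satisfied (up to the trivial near-cube adjustment), and $m^d$ is polynomial in $t$.

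I would first show that the three relevant bad events are each $t^{-\omega(1)}$. By Observation \ref{o.Ychernoff} and a union bound over the $m^d$ sub-cubes, every $\cY^{d,\a}_{t}$ satisfies $|\cY^{d,\a}_{t}|\leq 2u^d$ with failure probability $m^d e^{-\Omega(u^d)}=e^{-\Omega(t^{d\e/2})}$, which is q.s. Conditional on the sizes $N_\a=|\cY^{d,\a}_{t}|$, each $\gypa$ is a copy of $G_{N_\a,p}$ with $N_\a=\Theta(u^d)\to\infty$ and constant $p$, hence Hamiltonian with failure probability $e^{-\Omega(N_\a)}$; unioning again gives q.s. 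Hamiltonicity of every sub-graph. Finally, the edge-finding step in Lemma \ref{paste} fails with probability at most $m^d(1-p^2)^{\Omega(u^{2d})}$, again negligible.

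Conditional on these q.s.\ events, the optimal tour inside each sub-cube trivially has length at most
\[
T(\gypa)\leq N_\a\cdot(u+1)\sqrt d = O(u^{d+1}),
\]
since every edge in a Hamilton cycle has length at most the diameter of the near-cube. Applying Lemma \ref{paste} (with the obvious adjustment for near-cubes, which only changes constants since the sidelengths differ by $1$), we get
\[
T(\gyp)\leq \sum_{\a\in[m]^d}T(\gypa)+4m^d(u+1)\sqrt d = O(m^d u^{d+1})+O(m^d u) = O(t^d u)=O(t^{d+\e/2})
\]
q.s., and $t^{d+\e/2}\leq t^{d+\e}$ for $t$ sufficiently large. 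Note that for $d\geq 2$, the second (pasting) term $m^d u = t^d/u^{d-1}$ is absorbed into the first.

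The only mild obstacle I anticipate is verifying that Lemma \ref{paste} extends to near-cube decompositions; however, the proof of that lemma uses only (i) Hamiltonicity of each part, (ii) a size bound on each part, and (iii) the edge-finding argument between consecutive parts in a Hamming-path ordering of the indices, and none of these depend on the parts being exact cubes rather than near-cubes of sidelength $u$ or $u+1$. The same polynomial-in-$t$ bad-event bookkeeping carries through without change.
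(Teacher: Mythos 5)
Your proof is correct and follows the same approach as the paper: decompose into near-cubes of sidelength about $t^{\e/2}$, use the trivial $O(u^{d+1})$ diameter bound on each sub-tour after verifying q.s. Hamiltonicity and a q.s. size bound, then paste via Lemma \ref{paste} to get $O(t^{d+\e/2})$. The only minor redundancy is that the Hamiltonicity and edge-finding bad events you check separately are already absorbed into the failure probability $e^{-\Omega(u^dp)}$ built into Lemma \ref{paste}, and that lemma is already stated for (near-cube) decompositions, so no extension is needed.
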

\begin{proof}
We let $m=\flr{t^{1-\e/2}}$ $u=\flr{t/m}$, and let $\{\gypta\}$ be a decomposition of $\gyp$ into $m^d$ near-cubes with sidelengths in $\{u,u+1\}$. We have that q.s. each $\gypta$ has (i) $\approx u^d$ points, and (ii) a Hamilton cycle $H_\a$.  We can therefore q.s. bound all $T(\gypta)$ by $d u \cdot u^d$, and  Lemma \ref{paste} gives that q.s. $T(\gy d t p)\leq 4dut^d+4m^du\sqrt{d}.$ 
\end{proof}

To prove Theorem \ref{gtsp}, we now consider a decomposition $\{S_\a\}$ ($\a\in [m]^d$) of $\ydt$ into $m^d$ near-cubes of side-lengths in $\{u,u+1\}$, for $\g=1-\frac \e 2$, $m=\flr{t^\g},$ and $u=\flr{t/m}$.

Lemma \ref{l.Fasym} gives that
$$\E T(\gypa)\sim \b_p u^d \sim \b_p t^{(1-\g)d}.$$
Let 
\[
\cS_\g(\gyp)=\sum_{\a\in [m]^d}\min\set{T(\gypa),2dt^{(1-\g)(d+\e)}}.
\]
Note that $\cS_\g(\gyp)$ is the sum of $t^{\g d}$ identically distributed bounded random variables.

Applying Hoeffding's theorem we see that for any $t$, we have
$$\Pr(|\cS_\g(\gy d t p)-m^d \E(T(\gy d {u} p))|\geq T)\leq 
2\exp\left(-\frac{2T^2}{4m^dd^2t^{2(1-\g)(d+\e)}}\right).$$
Putting $T=t^{d\e}$ for small $\e$, we see that 
\beq{eq3}
\cS_\g(\gy d t p)=\b_p t^{d}+o(t^d)\qquad q.s.
\eeq
Now, since q.s. $T(\gypa)\leq 2dt^{(1-\g)(d+\e)}$ for all $\a$ by Lemma \ref{cruder}, we have that q.s. 
$\cS_\g(\cY^{d}_{t,p})=\sum_{\a}T(\gypa)$,
so that Lemma \ref{paste} implies that 
\beq{f1}
T(\gy d t p)\leq \cS_\g(\gy d t p)+\d_2\text{ where }\d_2=o(t^d)\qquad q.s.
\eeq

It follows from \eqref{eq3} and \eqref{f1} and the fact that $\Pr(|\ydt |=t^d)=\Omega(t^{-d/2})$ that 
\beq{f2}
T(\gyp)\leq\b_p t^d+o(t^{d})\qquad q.s.
\eeq
which proves part (ii) of Theorem \ref{gtsp}.

Of course, we have from Lemma \ref{l.Fasym} that
\begin{equation}
\E(T(\gyp))= \beta^d_p t^d+\d_1\text{ where }\d_1=o(t^d),
\end{equation}
and we show next that that this together with \eqref{f1} implies part (i) of Theorem \ref{gtsp}, that: 
\beq{f3}
T=T(\gyp)=\b_p t^d +o(t^d)\qquad a.a.s.
\eeq

We choose $0\leq\d_3=o(t^{\frac{d-1}{d}})$ such that $0\leq\d_2,|\d_1|=o(\d_3)$. Let $I=[\b t^{\frac{d-1}{d}}-\d_3,\b t^{\frac{d-1}{d}}+\d_2]$. Then we have
\begin{multline*}
\b t^{\frac{d-1}{d}}+\d_1=\E(T(\gyp)\mid T(\gyp)\geq \b t^{\frac{d-1}{d}}+\d_2)\Pr(T(\gyp)\geq \b t^{\frac{d-1}{d}}+\d_2)\\
+\E(T(\gyp)\mid T(\gyp)\in I)\Pr(T(\gyp)\in I)+\\
\E(T(\gyp)\mid T(\gypa)\leq \b t^{\frac{d-1}{d}}-\d_3)\Pr(T(\gyp)\leq \b t^{\frac{d-1}{d}}-\d_3).
\end{multline*}
Now $\e_1=\E(T(\gyp)\mid T(\gyp)\geq \b t^{\frac{d-1}{d}}+\d_2)\Pr(T(\gyp)\geq \b t^{\frac{d-1}{d}}+\d_2)=O(t^{-\om(1)})$ since $|\gyp|\leq 2d^{1/2}t^d$ 
and $\Pr(T(\gyp)\geq \b t^{\frac{d-1}{d}}+\d_2)=O(t^{-\om(1)})$.

So, if $\l=\Pr(T(\gyp)\in I)$ then we have
$$\b t^{\frac{d-1}{d}}+\d_1\leq \e_1+(\b t^{\frac{d-1}{d}}+\d_2)\l+(\b t^{\frac{d-1}{d}}-\d_3)(1-\l)$$
or
$$\l\geq \frac{\d_1-\e_1+\d_3}{\d_2+\d_3}=1-o(1),$$
and this proves \eqref{f3} competing the proof of Theorem \ref{gtsp}.\qed

To derive Theorem \ref{tsp}, we now let $\gwp$ be the graph on the set of points in $[0,t]^d$ which is the result of a Poisson process of intensity 1.  Our task is now to control the variance of $T(\gwp)$.  Here we follow Steele's argument \cite{S} with only small modifications. 

Let $\cE_t$ denote the event that 
\[
T(\gw d {2t} p)\leq \sum_{\a\in [2]^d} T(\gwpa) +2^{d+2}t\sqrt d.
\]
Observe that Lemma \ref{paste} implies that 
\begin{equation}\label{pneg}
\Pr(\neg\cE_t)\leq e^{-\Omega(t^d p)}.
\end{equation}

We define the random variable $\lambda(t)=T(\gw d {t} p)+10\sqrt d t,$ and let $\lambda_i$ denote independent copies.  Conditioning on $\cE_t$, we have
\begin{equation}\label{ideq}
\lambda_0 (2t)\leq \sum_{i=1}^{2^d}\lambda_i(t)-4\sqrt d t \leq \sum_{i=1}^{2^d}\lambda_i(t).
\end{equation}
In particular, \eqref{pneg} implies that there is enough room that, letting $\Upsilon(t)=\E(\lambda(t))$ and $\Psi(t)=\E(\lambda(t)^2)$, we have for sufficiently large $t$ that
\[
\Psi(2t)\leq 2^d\Psi(t)+2^d(2^d-1)\Upsilon^2(t)
\]
and for 
\[
\cV(t):=\V(T(\gw d {t} p))=\Psi(t)-\Upsilon(t)^2,
\]
we have
\[
\frac{\cV(2t)}{(2t)^{2d}}-\frac{1}{2^d}\frac{\cV(t)}{(t)^{2d}}\leq \frac{\Upsilon^2(t)}{t^{2d}}-\frac{\Upsilon^2(2t)}{(2t)^{2d}}.
\]
Now summing over $t=2^kt_0$ for $k=0,\dots,M-1$ gives
\[
              \sum_{k=1}^M\frac{\cV(2^kt)}{(2^kt)^{2d}}-
\frac 1 {2^d} \sum_{k=0}^{M-1} \frac{\cV(2^kt)}{(2^kt)^{2d}}\leq
\frac{\Upsilon^2(t)}{t^{2d}}-\frac{\Upsilon^2(2^M t)}{(2^M t)^{2d}}\leq \frac{\Upsilon^2(t)}{t^{2d}}
\]
and so, solving for the first sum, we find
\begin{equation}
\label{varsum}
\sum_{k=1}^M\frac{\cV(2^kt)}{(2^kt)^{2d}}\leq
(1-\frac{1}{2^d})\left(\frac{\cV(t)}{t^{2d}}+\frac{\Upsilon^2(t)}{t^{2d}}\right)<\infty.
\end{equation}
Still following Steele, we let $N(t)$ be the Poisson counting process on $[0,\infty).$  We fix a random embedding $\cU$ of $\N$ in $[0,1]^d$ as $u_1,u_2,\dots$ and a random graph $\cU_{p}$ where each edge is included with independent probability $p$.  We let $\cU_{n,p}$ denote the restriction of this graph to the first $n$ natural numbers.  In particular, note that $\cU_{N(t^d),p}$ is equivalent to $\cW_{t,p}$, scaled from $[0,t]^d$ to $[0,1]^d$.  Thus, applying Chebychev's inequality to \eqref{varsum} gives that
\begin{equation}
\sum_{k=0}^\infty\Pr\left(\left|\frac{t2^k T(\cU_{N((t2^k)^d),p})}{(t2^k)^d}-\beta^d_p\right|>\e\right)<\infty
\end{equation}
and so for $t>0$ that
\begin{equation}\label{2powlim}
\lim_{k\to \infty}\frac{T(\cU_{N((t2^k)^d),p})}{(t2^k)^{d-1}}=\beta\qquad a.s.
\end{equation}
Now choosing some large integer $\ell$, we have that \eqref{2powlim} holds simultaneously for all the (finitely many) integers $t\in S_P=[2^\ell,2^{\ell+1})$; and $r\in \Re$, we have that $r\in [2^kt,2^k(t+1))$ for $t\in S_\ell$ and some $k$.

Unlike the classical case $p=1$, in our setting, we do not have monotonicity of $T(\cU_{n,p})$.  Nevertheless, we show a kind of continuity of the tour length through $T(\cU_{n,p})$:
\begin{lemma}\label{sandwich}
For all $\e>0$, $\exists \d>0$ such that for all $0\leq k<\d n$, we have 
\begin{equation}
\label{cont}
T(\cU_{n+k,p})<T(\cU_{n,p})+\e n^{\frac{d-1}{d}},\qquad q.s.
\end{equation}
\end{lemma}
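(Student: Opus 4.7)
The plan is to construct a tour of $\cU_{n+k,p}$ from an optimum tour $H_0$ of $\cU_{n,p}$ by inserting the new points $u_{n+1},\ldots,u_{n+k}$ one at a time. At step $i$, obtain $H_i$ from $H_{i-1}$ by picking some tour edge $\{u,v\}\in H_{i-1}$ with both $u,v$ adjacent to $u_{n+i}$ in $\cU_{n+k,p}$, and replacing $\{u,v\}$ by $u,u_{n+i},v$. The insertion cost is $c_i=\de{u_{n+i}}{u}+\de{u_{n+i}}{v}-\de{u}{v}\le 2\de{u_{n+i}}{u'}$ for the closer endpoint $u'\in\{u,v\}$. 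Since $T(\cU_{n+k,p})\le T(\cU_{n,p})+\sum_{i=1}^k c_i$, it suffices to show that $\sum_i c_i<\e n^{(d-1)/d}$ q.s.\ for $\d$ small.

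For the per-insertion analysis, let $\cF_{i-1}=\sigma(\cU_{n+i-1,p})$ and list $\cU_{n+i-1}$ in order of distance from $u_{n+i}$ as $u^{(1)},u^{(2)},\ldots$. The indicators $X_j=\mathbf{1}[u_{n+i}\sim u^{(j)}]$ are i.i.d.\ Bernoulli$(p)$ and independent of $\cF_{i-1}$. Call $j$ \emph{successful} if $X_j=1$ and some $H_{i-1}$-tour-neighbor $w$ of $u^{(j)}$ satisfies $u_{n+i}\sim w$; any successful $j$ furnishes a valid insertion at cost $c_i\le 2\de{u_{n+i}}{u^{(j)}}$. Because each vertex is a tour-neighbor of exactly two vertices, the dependency graph on $\{u^{(j)}\}$ (with $j\sim j'$ whenever the success events at $j,j'$ share a random edge incident to $u_{n+i}$) has constant maximum degree, so we may thin $\{u^{(j)}\}$ to a subsequence $S$ of positive density whose success indicators are independent Bernoullis of rate $\rho(p)=p(1-(1-p)^2)>0$. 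Combined with the q.s.\ estimate $|\cU_{n+i-1}\cap B(u_{n+i},r)|=\Theta(r^d n)$ (by Chernoff), this yields
\[
\Pr(c_i>2r\mid \cF_{i-1})\le e^{-\Omega(r^dn)},\qquad \E[c_i\mid\cF_{i-1}]=O(n^{-1/d}),\qquad \E[c_i^2\mid\cF_{i-1}]=O(n^{-2/d}).
\]

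Set $d_i:=c_i-\E[c_i\mid \cF_{i-1}]$; this is a martingale difference sequence with $|d_i|\le\sqrt d$ and summed conditional variance $\sum_{i=1}^k\E[d_i^2\mid\cF_{i-1}]=O(\d n^{(d-2)/d})$. Freedman's inequality gives
\[
\Pr\!\left(\sum_{i=1}^k d_i>\tfrac{\e}{2}n^{(d-1)/d}\right)\le\exp\!\left(-\Omega(\e\,n^{(d-1)/d})\right)=n^{-\omega(1)},
\]
while $\E[\sum_i c_i]\le C_p k/n^{1/d}\le C_p\d n^{(d-1)/d}$. Taking $\d<\e/(2C_p)$ then forces $\sum_i c_i<\e n^{(d-1)/d}$ q.s., as required.

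The main obstacle is the per-insertion tail bound, which hinges on handling the fact that a tour-neighbor of $u^{(j)}$ may itself be some $u^{(j')}$, so the success indicators across $j$ are not independent. This is resolved by the subsequence-thinning argument above, which pays only a constant factor in the independent success rate $\rho(p)$; once the sub-Gaussian-type tail for $c_i$ is established, the martingale concentration is routine.
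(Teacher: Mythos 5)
Your proof is correct, and it takes a genuinely different route from the paper's. The paper splits into two cases by the size of $k$. For $k\le n^{1/3}$ it simply applies the one-point insertion bound $T(\cU_{n+1,p})<T(\cU_{n,p})+\sqrt d$ (q.s., by finding an edge of the old tour whose endpoints are both adjacent to the new vertex) a total of $k$ times, which gives the crude but sufficient cost $k\sqrt d=o(n^{(d-1)/d})$ since $d\ge 2$. For $k>n^{1/3}$ it observes that the $k$ new vertices span a q.s.\ Hamiltonian subgraph, invokes the already-proved Theorem~\ref{gtsp}(ii) to produce a tour through them of length $O(k^{(d-1)/d})=O(\d^{(d-1)/d}n^{(d-1)/d})$, and patches the two tours together at $O(\sqrt d)$ extra cost. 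Your argument avoids the case split and the appeal to Theorem~\ref{gtsp} entirely, instead controlling the total greedy-insertion cost directly: the thinning of $\{u^{(j)}\}$ to an independent subsequence of the constant-degree dependency graph gives genuinely independent success trials at rate $\rho(p)$, which yields the sub-Gaussian tail for each $c_i$ and then Freedman's inequality for the sum. The paper's version is shorter because it leans on a major theorem already in hand; yours is more self-contained and extracts a per-insertion tail estimate that the paper's $k\sqrt d$ bound for small $k$ throws away. Two technical points you should spell out: (i) the per-insertion tail bound $\Pr(c_i>2r\mid\cF_{i-1})\le e^{-\Omega(r^d n)}$ holds only on a q.s.\ event over $\cF_{i-1}$ (the first $n+i-1$ points must be well-spread, and for $r^d n$ below $\operatorname{polylog}(n)$ the bound is vacuous), so Freedman's inequality should be applied to a stopped or truncated martingale; (ii) with exponentially small probability no successful insertion exists at some step, in which case $c_i$ should be set to the trivial bound $2\sqrt d$ so that $|d_i|$ stays bounded and the martingale is well-defined. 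Both are routine, but worth a sentence each.
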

\begin{proof}
We consider cases according to the size of $k$.  

\noindent\textbf{Case 1}: $k\leq n^{\frac 1 3}$.\\
Note that we have $T(\cU_{n+1,p})<T(\cU_{n,p})+\sqrt d$ q.s., since we can q.s. find an edge in the minimum tour though $\cU_{n,p}$ whose endpoints are both adjacent to $(n+1)$.   $n^{\frac 1 3}$ applications of this inequality now give \eqref{cont}.

\noindent \textbf{Case 2:} $k> n^{\frac 1 3}$. \\ 
In this case the restriction $\mathcal{R}$ of $\cU_{n+k,p}$ to $\{n+1,\dots,k\}$ is q.s. (with respect to $n$) Hamiltonian \cite{BFF}.  In particular, by Theorem \ref{gtsp}, we can q.s. find a tour $T$ though $\mathcal{R}$ of length $\leq 2\beta^d_p k^{\frac{d-1}d}$.  Finally, there are q.s., edges $\{x,y\}$ and $\{w,z\}$ on the minimum tours through $\cU_{n,p}$ and $\mathcal{R}$, respectively, such that $x\sim w$ and $y\sim z$ in $\cU_{n+k,p}$, giving a tour of length
\[
T(\cU_{n+k,p})\leq T(\cU_{n,p})+ 2\beta^d_p k^{\frac{d-1}d}+4\sqrt d.\qedhere
\]
\end{proof}
Applying Lemma \ref{sandwich} and the fact that $N((1+\d)r^d)<(1+2\d)N(r^d)$ q.s (with respect to $r$). gives that for some $\e_\ell>0$ which can be made arbitrarily small by increasing $\ell$, we have q.s.
\[
T(\cU_{N(((t+1)2^k)^d),p})-\e_\ell r^{d-1}
<
T(\cU_{N(r^d),p})
<
T(\cU_{N((t2^k)^d),p})+\e_\ell (t2^k)^{d-1},
\]
and so dividing by $r^{d-1}$ and taking limits we find that a.s.
\[
(\beta-\e_\ell)(1+\tfrac{1}{2^p})^{d-1}
\leq 
\liminf_{r\to \infty} \frac{T(\cU_{N(r^d)})}{r^{d-1}}
\leq
\limsup_{r\to \infty} \frac{T(\cU_{N(r^d)})}{r^{d-1}}
\leq
\frac{\beta+\e_\ell}{(1+\frac{1}{2^p})^{d-1}}.
\]
Since $\ell$ may be arbitrarily large, we find that 
\[
\lim_{r\to \infty} \frac{T(\cU_{N(r^d)})}{r^{d-1}}=\beta.
\]
Now the elementary renewal theorem guarantees that 
\[
N^{-1}(n)\sim n,\qquad a.s.
\]
So we have a.s.
\[
\lim_{r\to \infty} \frac{T(\cU_{n,p})}{n^{\frac{d-1}{d}}}= \lim_{r\to \infty} \frac{T(\cU_{N(N^{-1}(n)),p})}{(N^{-1}(n))^{\frac{d-1}{d}}}\frac {(N^{-1}(n))^{\frac{d-1}{d}}}{n^{\frac{d-1}{d}}}=\beta\cdot 1=\beta.
\]

\subsection{The case $p(n)\to 0$}\label{TSPworst}
We will in fact show that \eqref{Tgxp} holds q.s. for $np\geq \om\log n$, for some  $\om\to\infty$. That we also get the statement of Theorem \ref{TSPworst} can be seen by following the proof carefully, but this also follows as a consequence directly from the appendix in Johannson, Kahn and Vu \cite{JKV}.

We first show that q.s. 
\beq{W1}
T(\gxp)=\Omega(n^{(d-1)/d}/p^{1/d}).
\eeq 
Let $Y_1$ denote the number of vertices whose closest $G_{n,p}$-neighbor is within $\frac{1}{(np)^{1/d}}$. Observe first that if $r=1/(np)^{1/d}$ then with probability $\geq \brac{1-\n_d r^dp}^{n-1}\approx e^{-\n_d}$, there are no points within distance $1/(np)^{1/d}$ of any fixed $v\in\gxp$. Thus $\E(Y_1)\geq ne^{-\n_d}/2$ and one can use the Azuma-Hoeffding inequality to show that $Y_1$ is concentrated around its mean. Thus q.s. $T(\gxp)\geq n^{(d-1)/d}e^{-\n_d}/4p^{1/d}$, proving \eqref{W1}.

We will for convenience prove
\begin{theorem}\label{convenience}
Let $\yd{1}\sbs [0,1]^d$ denote a set of points chosen via a Poisson process of intensity one in $[0,t]^d$ where $t=n^{1/d}$. Then there exists a constant $\g_p^d$ such that
$$T(\gyp)\leq \g_p^d \frac{t^d}{p^{1/d}}\qquad q.s.$$
\end{theorem}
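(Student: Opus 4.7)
My plan is to adapt the recursive proof of Theorem~\ref{gtsp} to control the normalised quantity $f(t):=\E T(\gyp)\cdot p^{1/d}/t^d$; showing $f(t)\leq \gamma_p^d$ for an absolute constant gives the theorem.

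For the base case $t\leq C/p^{1/d}$, the crude diameter bound $T(\gyp)\leq N\cdot t\sqrt d\leq t^{d+1}\sqrt d$ yields $f(t)\leq C\sqrt d$. For the inductive step $t>C/p^{1/d}$, I would partition $[0,t]^d$ into $m^d$ near-cubes $Q_\a$ of side $u=t/m$, with $u$ chosen so that (i) $u^dp\geq\omega\log n$, making each restricted subgraph $\gyp\cap Q_\a$ q.s.\ Hamiltonian (since $Np\geq\log N$) and ensuring the patching step of Lemma~\ref{paste} succeeds q.s.\ (its failure probability $(1-p^2)^{(u^d/2)^2}\leq e^{-(u^dp)^2/4}$ is $n^{-\omega(1)}$), and (ii) $u$ is significantly smaller than $t$ (e.g.\ $u=t^{1/2}$) so the recursion terminates in $O(\log\log n)$ levels. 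Applying the inductive hypothesis to each near-cube and patching via Lemma~\ref{paste},
\[
\E T(\gyp)\leq m^d\,\E T(\gyp\cap Q_\a)+4m^du\sqrt d,
\]
which, after dividing by $(mu)^d/p^{1/d}$, gives $f(t)\leq f(u)+4\sqrt d\,p^{1/d}/u^{d-1}$. Iterating along $u_k=t^{2^{-k}}$ down to the base case, the cumulative additive error is $4\sqrt d\sum_k p^{1/d}/u_k^{d-1}$; this is bounded since the largest term (attained near the base case) is $O(p/C^{d-1})$ and there are only $O(\log\log n)$ levels, so $f(t)\leq C\sqrt d+o(1)$, yielding $\E T(\gyp)\leq \gamma_p^d\,t^d/p^{1/d}$ with $\gamma_p^d$ an absolute constant. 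The q.s.\ upgrade follows from a Hoeffding concentration argument across the $m^d\gg\log n$ near-cubes, exactly as in the proof of Theorem~\ref{gtsp}(ii).

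The main technical hurdle is satisfying conditions (i) and (ii) on $u$ simultaneously at every level of the recursion. In the sparse regime $p=\Theta(\log n/n)$, condition (i) forces $u^d=\Omega(n/\log n)\approx t^d$, so near-cubes provide only slight reduction per level, and the sequence $\{u_k\}$ must be chosen carefully to stay above the q.s.\ threshold while still terminating. A somewhat more delicate variant of this argument --- or, as noted earlier, the direct route via the appendix of Johansson--Kahn--Vu \cite{JKV} --- would handle the tightest regime $np=\Theta(\log n)$.
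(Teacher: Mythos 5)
The recursion you propose cannot reach your claimed base case, and this is exactly the obstacle the paper's proof is built to circumvent. At every level you need each subcube $\gyp\cap Q_\a$ to be q.s.\ Hamiltonian so that a tour exists and Lemma~\ref{paste} applies; this forces $u^dp\gtrsim\omega\log n$ throughout. But your base case is $t\leq C/p^{1/d}$, i.e.\ $t^dp\leq C^d$: there the subcube graph is $G_{N,p}$ with $N\approx C^d/p$ and average degree $Np\approx C^d$, a constant, far below the Hamiltonicity threshold $\log N$. So the recursion can only descend to scale $u_{\min}^d\asymp\omega\log n/p$, and at that scale the crude diameter bound gives $f(u_{\min})=u_{\min}\sqrt d\,p^{1/d}\asymp\sqrt d(\omega\log n)^{1/d}$, not a constant. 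Iterating then yields only $T(\gyp)=O\bigl(t^d(\omega\log n)^{1/d}/p^{1/d}\bigr)$, a polylogarithmic factor too large. In the tight regime $np=\Theta(\log n)$ it is worse: $u^dp\geq\omega\log n$ already forces $u^d=\Omega(\omega n)>t^d$, so not even one subdivision is available. No clever choice of the ladder $\{u_k\}$ fixes this --- the problem is intrinsic to demanding Hamiltonicity of the atoms, not a matter of bookkeeping.

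The paper escapes this by never requiring Hamiltonicity of the subcubes. It splits the edge set into independent layers $G_0,G_1,\ldots,G_{k+1}$ with geometrically decaying probabilities $p_i$. Using $G_1$ it finds, in each ``heavy, typical'' subcube of $\Theta(1/p_1)$ points, a cycle covering only a $(1-\e)$ fraction of the points (this needs only constant expected degree $K$, no $\log$), then patches these near-spanning cycles into one long cycle $H_1$ by applying Theorem~\ref{gtsp}(ii) to a \emph{coarse-grained}, constant-density point process whose atoms are the typical subcubes. It iterates with $G_2,G_3,\ldots$ on the geometrically shrinking leftover set ($T_j\leq 3\e T_{j-1}$), producing vertex-disjoint cycles $H_1,\ldots,H_\ell$ of total length $O(t^d/p^{1/d})$, and finally uses P\'osa rotation--extension on the reserve layer $G_0$ to absorb the last stray vertices and merge everything into a single Hamilton cycle at additional cost $O(t^d)$. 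This ``build imperfect cycles in several rounds, then absorb'' structure is the missing ingredient; a straight recursive subdivision cannot get below the Hamiltonicity threshold and so cannot establish the $1/p^{1/d}$ scaling.
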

\begin{proof}
We consider independent copies  of $\cY^d_{t,p_i},\,i=1,2,\ldots,k+1$. We will let $p_0=p_1=p/3$ and $p_i=p_1/2^{i},i=1,2,\ldots,k=\log_2t$ and define $p_{k+1}$ so that $1-p=\prod_{j=1}^{k+1}(1-p_j)$.  Observe that with this choice, we have that $\gyp$ decomposes as $\gyp=\bigcup_{i=0}^{k+1}G_i$, where the $G_i$ are spanning subgraphs given by independent instances of $\cY^d_{t,p_i}$.

We continue by constructing a large cycle, using only the edges of $G_1$. We choose $\e$ small and then choose $K$ sufficiently large for subsequent claims. In preparation for an inductive argument we let $t_1=t$, $T_1=t_1^d$, $m_1=\flr{(T_1p_1/K)^{1/d}}$ and consider the partition $\D_1=\{S_\a\}$ ($\a\in [m_1]^d$) of $[0,t]^d$ into $m_1^d$ subcubes of side length $u=t/m$. (Note that $t$ will not change throughout the induction). Now each $S_{\a}$ contains $\approx K/p_1$ vertices, in expectation and so it has at least $(1-\e)K/p_1$ vertices with probability $1-e^{-\Omega(K/p_1)}=1-o(1)$. Let $\a$ be {\em heavy} if $S_\a$ has at least this many vertices, and {\em light} otherwise. Let $\G_\a$ be the subgraph of $G_{1}$ induced by $S_\a$. If $\a$ is heavy then for any $\e>0$ we can if $K$ is sufficiently large find with probability at least $1-e^{-\Omega(K/p_1)}=1-o(1)$, a cycle $C_\a$ in $\G_\a$ containing at least $(1-\e)^2K/p_1$ vertices. This is because when $\a$ is heavy, $\G_\a$ has expected average degree at least $(1-\e)K$.  We say that a heavy $\a$ is {\em typical} if it $\G_\a$ contains a cycle with $(1-\e)|S_\a\cap\cX|$ edges; otherwise it is {\em atypical}.

We now let $N$ denote the set of vertices in $\bigcup C_\a$, where the union is taken over all typical heavy $\a$.  Our aim is to use Theorem \ref{gtsp}(ii) to prove that we can q.s.~merge the vertices $N$ into a single cycle $C_1$, without too much extra cost, and using only the edges of $G_1$.  Letting $q_\a=Pr(S_\a\text{ is normal})\geq 1-\e$, we make each typical heavy $\a$ \emph{available} for this round with independent probability $\frac{1-\e}{1-q_\a}$, so that the probability that any given $\a$ is available is exactly $1-\e$. (This is of course {\em rejection sampling}.) Now we can let $Y=\yd 1$ in Theorem \ref{gtsp} be a process which places a single point at the center of $[0,1]^d$ with probability $1-\e$, or produces an empty set with probability $\e$. Let now $Y_\a$ ($\a\in t^d)$ be the independent copies of $Y$ which give $\ydt$. Given two cycles $C_1,C_2$ in a graph $G$ we say that edges $u_i=(x_i,y_i)\in C_i,i=1,2$ are a {\em patchable pair} if $f_x=(x_1,x_2)$ and $f_y=(y_1,y_2)$ are also edges of $G$.  Given $x\in Y_\a, y\in Y_{\b}$, we let $x\sim y$ whenever there exist \emph{two disjoint} patchable pairs $\s_{\a,\b}$ between $C_\a, C_\b$.  Observe that an edge between two vertices of $\yd 1$ is then present with probability 
\[
q_{\a,\b}\geq\Pr(Bin(K^2/100p_1^2,p_1^2)\geq 2)\geq 1-\e.  
\]
In particular, this graph contains a copy of $\cY^d_{1,(1-\e)}$, for which Theorem \ref{gtsp}(ii) gives that q.s. we have a tour of length $\leq B_1 m_1^d$ for some constant $B_1$; in particular, there is a path $P=(\a_1,\a_2,\dots,\a_M)$ through the typical heavy $\a$ with at most this length. Using $P$, we now merge its cycles $C_{\a_i},i=1,2,\ldots,M$ into a single cycle.

Suppose now that we have merged $C_{\a_1},C_{\a_2},\ldots,C_{\a_j}$ into a single cycle $C_j$ and have used one choice from $\s_{\a_{j-1},\a_j}$ to patch $C_{\a_j}$ into $C_{j-1}$. We initially had two choices for patching $C_{\a_{j+1}}$ into $C_{\a_j}$, one may be lost, but one at least will be available. Thus we can q.s. use $G_1$ to create a cycle $H_1$ from $C_{\a_1},C_{\a_2},$ by adding only patchable pairs of edges, giving a total length of at most
\beq{cy1}
2T_1\times \frac{2t_1d^{1/2}}{m_1}+a_1m_1^d\times \frac{2t_1d^{1/2}}{m_1}\leq \frac{3T_1d^{1/2}}{p_1^{1/d}}.
\eeq
The first term in \eqref{cy1} is a bound on the total length of the cycles $C_\a$ where $\a$ is available, assuming that $|\gyp|\leq 2t^d$. The second smaller term is the q.s. cost of patching these cycles into $H_1$.

Having constructed $H_1$, we will consider coarser and coarser subdivisions $\cD_i$ of $[0,t]^d$ into $m_i^d$ subcubes, and argue inductively that we can q.s. construct, for each $1\leq i\leq \ell$ for suitable $\ell$, vertex disjoint cycles $H_1,H_2,\ldots,H_\ell$ satisfying:
\begin{enumerate}
\item \label{P.leftover} $T_i\leq 3\e T_{i-1}$ for $i\geq 2$, where $T_j=t^d-\sum_{i=1}^{j-1}|H_i|$,
\item \label{P.ind} the set of points in the $\a$th subcube in the decomposition $\cD_i$ occupied by vertices which fail to participate in $H_i$ is given by a process which occurs independently in each subcube in $\cD_i$, and
\item \label{P.length}  the total length of each $H_i$ is at most $\frac{3T_id^{1/2}}{p_i^{1/d}}$. 
\end{enumerate}
Note that $H_1$, above, satisfies these conditions for $\ell=1$.

Assume inductively that we have constructed such a sequence $H_1,H_2,\ldots,H_{j-1}$ $(j\geq 2)$. We will now use the $G_j$ edges to construct another cycle $H_j$. Suppose now that the set $\cT_j$ of points that are not in $\bigcup_{i=1}^{j-1}H_i$ satisfies $T_{j}=|\cT_j|\geq t^{d-1}/\log t$. We let $m_j=(T_{j}p_j/K)^{1/d}$ and $t_j=T_j^{1/d}$. The expected number of points in a subcube will be $K/p_j$ but we have not exercised any control over its distribution. For $i\geq 2$, we let $\a\in [m_i]^d$ be heavy if $S_\a$ contains at least $\e K/p_j$ points. Now we want $K$ to be large enough so that $\e K$ is large and that a heavy subcube has a cycle of size $(1-\e)|\cT_j\cap S_\a|$ with probability at least $1-\e$, in which case, again, it is \emph{typical}. We define $\G_j$ as the set of typical heavy pairs $\{\a,\b\}$ for which there are at least two disjoint patchable pairs between the corresponding large cycles.  Applying the argument above with $T_j,t_j,m_j,\G_j$ replacing $T_1,t_1,m_1,\G_1$ (note that \ref{P.ind}, above, ensures that Theorem \ref{gtsp} applies) we can q.s. find a cycle $H_j$ with at least $(1-3\e)T_j$ vertices and length at most $\frac{3T_jd^{1/2}}{p_j^{1/d}}$, giving induction hypothesis part \ref{P.length}.    Part \ref{P.leftover} is satisfied since the light subcubes only contribute $\e$ fraction of points to $\cT_j$, and we q.s. take a $(1-\e)$ fraction of the heavy subcubes.   Finally, Part \ref{P.ind} is satisfied since participation in $H_j$ is determined exclusively by the set of adjacency relations in $G_j\cap \cT_j$, which is independent of the positions of the vertices.

Thus we are guaranteed a sequence $H_1,H_2,\ldots, H_\ell$ as above, such that $T_{\ell+1}<t^{d-1}/\log t$. The total length of $H_1,H_2,\ldots,H_\ell$ is at most
\beq{almost}
\sum_{i=1}^\ell \frac{3T_id^{1/2}}{p_i^{1/d}}\leq \frac{3^{1+1/d}t^d}{p^{1/d}}\sum_{i=1}^\infty 3^i\cdot 2^{i/d} \e^{i-1}=O\bfrac{t^d}{p^{1/d}}.
\eeq
We can now use $G_0$ to finish the proof. It will be convenient to write $G_0=\bigcup_{i=0}^2A_i$ where $A_i,i=1,2,3$ are independent copies of $\cY^d_{t,q}$ where $1-p_0=(1-q)^3$. Also, let $R=\set{x_1,x_2,\ldots,x_r}=\gyp\setminus\bigcup_{i=1}^\ell H_i$.

We first create a Hamilton path containing all vertices, only using the edges of $A_1\cup A_2$ and the extension-rotation algorithm introduced by P\'osa \cite{Po}. We begin by deleting an arbitrary edge from $H_1$ to create a path $P_1$. Suppose inductively that we have found a path $P_j$ through $Y_j=H_1\cup \cdots H_{\r_j}\cup  X_{j}$ where $X_{j}\subseteq R$ at an added cost of $O(jt)$. We let $V_j$ denote the vertices of $P_j$ and promise that $V_{\ell+r}=\gyp$. We also note that $|V_j|\geq |V_1|=\Omega(t^d)$ for $j\geq 1$.

At each stage of our process to create $P_{j+1}$ we will construct a collection $\cQ=\set{Q_1,Q_2,\ldots,Q_r}$ of paths through $V_j$. Let $Z_\cQ$ denote the set of endpoints of the paths in $\cQ$. {\em Round} $j$ of the process starts with $P_j$ and is finished when we have constructed $P_{j+1}$.

If at any point in round $j$ we find a path $Q$ in $\cQ$ with an endpoint $x$ that is an  $A_2$-neighbor of a vertex in $y\notin V_{j}$ then we will make a {\em simple extension}
and proceed to the next round. If $x\in H_i$ then we delete one of the edges in $H_i$ incident with $y$ to create a path $Q'$ and then use the edge $(x,y)$ to concatenate $Q,Q'$ to make $P_{j+1}$. If $x\in R$ then $P_{j+1}=Q+y$. 

If $Q=(v_1,v_2,\ldots,v_s)\in \cQ$ and $(v_s,v_1)\in A_1$ then we can take any $y\notin V_j$ and with probability at least $1-(1-q)^s=1-O(t^{-\om(1)})$ find an edge $(y,v_i)\in A_2$. If there is a cycle $H_i$ with $H_i\cap V_j=\emptyset$ then we choose $y\in H_i$ and delete one edge of $H_i$ incident with $y$ to create a path $Q'$ and then we can take $P_{j+1}=(Q',v_i,v_{i-1},\ldots,v_{i+1})$ and proceed to the next round. Failing this, we choose any $y\in R\setminus V_j$ and let $P_{j+1}=(y,v_i,v_{i-1},\ldots,v_{i+1})$ and proceed to the next round. Note that this is the first time we will have examined the $A_2$ edges incident with $y$. We call this a {\em cycle extension}.

Suppose now that $Q=(v_1,v_2,\ldots,v_s)\in \cQ$ and $(v_s,v_i)\in A_1$ where $1<i<s-1$. The path $Q'=(v_1,\ldots,v_i,v_s,v_{s-1},\ldots,v_{i+1})$ is said to be obtained by a rotation. $v_1$ is the {\em fixed} endpoint. We partition $\cQ=\cQ_0\cup \cQ_1\cup\cdots\cup \cQ_{k_0},k_0=\log t$ where $\cQ_0=\set{P_j}$ and $\cQ_i$ is the set of paths that are obtainable from $P_j$ by exactly $i$ rotations with fixed endpoint $v_1$. We let $N_i$ denote the set of endpoints of the paths in $\cQ_i$, other than $v_1$, and let $\n_i=|N_i|$ and let $N_\cQ=\bigcup_iN_i$. We will prove that q.s. 
\beq{posa}
|\n_i|\leq \frac{1}{100q}\text{ implies that }|\n_{i+1}|\geq \frac{|\n_i|t^dq}{300}.
\eeq
It follows from this that q.s. we either end the round through a simple or cycle extension or arrive at a point where the paths in $\cQ$ have $\Omega(t^d)$ distinct endpoints. We can take an arbitrary $y\notin V_j$ and find an $A_2$ neighbor of $y$ among $N_\cQ$. The probability we cannot find a neighbor is at most $(1-q)^{\Omega(t^d)}=O(t^{-\om(1)})$. Once we prove \eqref{posa} we will have shown that we can create a Hamilton path through $\gyp$ from $H_1,H_2,\ldots,H_\ell,R$ at an extra cost of $O(d^{1/2}(t\ell+t^{d-1}/\log t\times \log t\times t))=O(t^d)$. We will not have used any $A_3$ edges to do this. The second $\log t$ factor comes from the fact that each path is obtained by at most $k_0$ rotations and each rotation adds one new edge.

{\bf Proof of \eqref{posa}:} We first prove that in the graph induced by $A_1$ we have
\beq{posa1}
|S|\leq  \frac{1}{100q}\text{ implies that }|N_{A_1}(S)|\geq \frac{|S|t^{d}q}{100}.
\eeq
Here $N_{A_1}(S)$ is the set of vertices not in $S$ that have at least one $A_1$-neighbor in $S$.

Indeed, if $s_0=\frac{1}{100q}=o(n)$ then
\begin{align*}
\Pr(\exists S)&\leq \sum_{s=1}^{s_0}\binom{t^d}{s}\Pr\brac{Bin(t^d-s,1-(1-q)^s)\leq \frac{s t^{d}q}{100}}\\
&\leq \sum_{s=1}^{s_0}\binom{t^d}{s}\Pr\brac{Bin\brac{t^d-s,\frac{sq}{2}}\leq \frac{s t^{d}q}{100}}\\
&\leq \sum_{s=1}^{s_0}\brac{\frac{t^de}{s}\cdot e^{-\Omega(t^{d}q)}}^s\\
&=O(t^{-\om(1)}).
\end{align*}
Now \eqref{posa} holds for $i=0$ because q.s. each vertex in $\gyp$ is incident with at least $t^{d}q/2$ $A_1$ edges. Given \eqref{posa1} for $i=0,1,\ldots,i-1$ we see that $\n_1+\cdots+\n_{i-1}=o(\n_i)$. In which case \eqref{posa1} implies that
$$\n_{i+1}\geq \frac{|N_{A_1}(N_i)|-(\n_0+\cdots+\n_{i-1})}{2}\geq \frac{t^{\g d}\n_i}{2+o(1)}$$ 
completing an inductive proof of \eqref{posa}.

Let $P^*$ be the Hamilton path created above. We now use rotations with $v_1$ fixed via the edges $A_2$ to create $\Omega(t^d)$ Hamilton paths with distinct endpoints. We then see that q.s. one of these endpoints is an $A_2$-neighbor of $v_1$ and so we get a tour at an additional cost of $O(d^{1/2}t)$.

This completes the proof of Theorem \ref{convenience}.
\end{proof}

The upper bound in Theorem \ref{worst} follows as before by (i) replacing $\gyp$ by $\gxp^d$, allowable because our upper bound holds q.s. and $\Pr(|\gyp|=t^d)=\Omega(t^{-d/2})$ and then (ii) scaling by $n^{-1/d}$ so that we have points in $[0,1]^d$.

\section{An algorithm}

To find an approximation to a minimum length tour in $\gxp$, we can use a simple version of Karp's algorithm \cite{K}. We let $m=(n/K\n_d\log n)^{1/d}$ for some constant $K>0$ and partition $[0,1]^d$ into $m^d$ subcubes of side $1/m$, as in Lemma \ref{paste} . The number of points in each subsquare is distributed as the binomial $B(n,q)$ where $q=K\log n/n$ and so we have a.a.s.~that every subsquare has $K\log n\pm\log n$, assuming $K$ is large enough. The probability that there is no Hamilton cycle in $S_{\a}$ is $O(e^{-Knqp/2})$ and so a.a.s. every subsquare induces a Hamiltonian subgraph. Using the dynamic programming algorithm of Held and Karp \cite{HK} we solve the TSP in each subsquare in time $O(\s^22^\s)\leq n^K$, where $\s=\s_{\a}=|S_\a\cap \cX_{n,p}|$. Having done this, we can with probability of failure bounded by $m^2(1-p^2)^{(K\log n)^2}$ patch all of these cycles into a tour at an extra $O(m^{d-1})=o(n^{\frac{d-1}{d}})$ cost. The running time of this step is $O(m^d\log^2n)$ and so the algorithm is polynomial time overall. The cost of the tour is bounded q.s.~as in Lemma \ref{paste}. This completes the proof of Theorem \ref{tsp}.

\section{Further questions}
\label{Qs}
Theorem \ref{t.expected} shows that there is a definite qualitative change in the diameter of $\gxp$ at around $p=\frac{\log^dn}{n}$, but our methods leave a $(\log\log n)^{2d}$ size gap for the thresholds.
\begin{q}
What is the precise threshold for there to be distances in $\gxp$ which tend to $\infty$?   What is the precise threshold for distance in $\gxp$ to be arbitrarily close to Euclidean distance?  What is the behavior of the intermediate regime?
\end{q}

\noindent One could also analyze the geometry of the geodesics in $\gxp$ (Figure \ref{f.paths}).  For example:
\begin{q}
\label{pathgeom}
Let $\ell$ be the length of a random edge on the geodesic between fixed points at at constant distance in $\gxp$.  
What is the distribution of $\ell$?
\end{q}

\smallskip
Improving Theorem \ref{worst} to give an asymptotic formula for $T(\gxp)$ is another obvious target.  It may seem unreasonable to claim such a formula for all (say, decreasing) functions $p$; in particular, in this case, the constant in the asymptotic formula would necessarily be universal.  The following, however, seems reasonable:

\begin{conjecture} If $p=\frac 1 {n^\a}$ for some constant $0<\a<1$ then there exists a constant $\b^d_\a$ such that a.a.s. $T(\gxp)\sim \b_\a\frac{n^{\frac{d-1}{d}}}{p^{1/d}}$. 
\end{conjecture}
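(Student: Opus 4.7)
The plan is to adapt the BHH-style subadditive argument of Section~\ref{TSP} to the sparsifying regime $p=n^{-\alpha}$. First I would reformulate via the Poisson representation: let $\gwp$ denote a rate-$1$ Poisson process in $[0,t]^d$ (so the expected vertex count is $n=t^d$) with independent edge probabilities $p=t^{-\alpha d}$. Since $n^{(d-1)/d}/p^{1/d}=t^{d-1+\alpha}$ in $[0,1]^d$ and tours in $[0,t]^d$ live at a further scaling factor of $t$, the conjecture becomes: there exists $\gamma^d_\alpha>0$ such that $T(\gwp)\sim\gamma^d_\alpha\, t^{d+\alpha}$ a.a.s.

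Next I would follow the template of Lemma~\ref{paste}: partition $[0,t]^d$ into $m^d$ near-cubes of side $u\approx t/m$ and, using Theorem~\ref{worst} in place of Lemma~\ref{l.Fasym} to build short Hamilton cycles inside each subcube, obtain
\[
T(\gwp)\leq \sum_{\a\in[m]^d}T(\gwpa) + O(m^d u\sqrt d)\qquad\text{q.s.,}
\]
provided $u^dp\geq K\log(u^d)$ so that Theorem~\ref{worst} applies within each subcube (which holds whenever $m\ll t^{1-\alpha}/(\log t)^{1/d}$). Taking expectations, dividing by $t^{d+\alpha}$ and iterating as in \eqref{zxc}--\eqref{zxcv} should yield $\E(T(\gwp))/t^{d+\alpha}\to\gamma^d_\alpha$. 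To promote this to an a.a.s.~statement, a variance bound in the spirit of Steele's argument at \eqref{varsum} should suffice: q.s.~the nearest graph neighbour of any point lies within distance $O(p^{-1/d})$, so the addition or deletion of a single point changes $T$ by at most $O(p^{-1/d})$ q.s., and a bounded-differences inequality then gives $\V(T(\gwp))=o(t^{2(d+\alpha)})$ whenever $d\geq 2$.

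The main obstacle is that this subdivision is \emph{not} self-similar: a subcube of side $u=t/m$ is an instance of the same process but with edge probability $p=t^{-\alpha d}=u^{-\alpha d}\cdot m^{-\alpha d}$, whereas the ``natural'' self-similar edge probability would be $u^{-\alpha d}$. Thus each subinstance is sparser by a factor $m^{-\alpha d}$ than the corresponding smaller instance appearing in the statement of the conjecture, and the recursion does not close on itself. I would address this by restricting the decomposition ratio to $m\leq t^{\varepsilon}$ (so that $m^{-\alpha d}=t^{-O(\varepsilon)}$ is close to $1$), combined with a separate monotonicity or continuity statement in the sparsity parameter --- for instance, via the natural monotone coupling of edge sets at two different probabilities --- that allows sub-instances at edge probability $p$ to be compared to sub-instances at edge probability $u^{-\alpha d}$. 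The delicate part is showing that the $O(\varepsilon)$ errors accumulated at each level of the iteration remain summable, analogously to the telescoping bound \eqref{zxcv}; positivity of the limit then follows, as in the constant-$p$ case, from the $\Omega(n^{(d-1)/d}/p^{1/d})$ lower bound established at \eqref{W1}.
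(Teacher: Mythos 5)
This statement is explicitly left as a \emph{conjecture} in the paper; the authors do not prove it and indeed offer it as an open problem, so there is no paper proof to compare against. Your proposal cannot therefore be ``essentially the same,'' and the real question is whether your sketch closes the conjecture. It does not, and you have in fact put your finger on exactly the gap that makes it a conjecture: the subdivision is not self-similar in $p$. Unfortunately your two proposed repairs do not survive scrutiny. First, taking $m\leq t^{\varepsilon}$ does not make the sparsity mismatch $m^{-\alpha d}=t^{-\alpha d\varepsilon}$ ``close to $1$''; for any fixed $\varepsilon>0$ this factor tends to $0$ as $t\to\infty$, so the subcubes are asymptotically much sparser than the instances the recursion would need, and the error does not telescope as in \eqref{zxc}--\eqref{zxcv}. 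Second, the monotone coupling of edge sets at two probabilities runs the wrong way: since $p<u^{-\alpha d}$, coupling gives $T(\text{subcube},p)\geq T(\text{subcube},u^{-\alpha d})$, which is a lower bound on the summands in the pasting inequality $\Phi(t)\leq\sum_\alpha T_\alpha+O(m^d u)$, when an upper bound is what the subadditive argument requires. One would instead need a quantitative statement that the normalized constant $\gamma(p)=\lim_t \Phi_p(t)p^{1/d}/t^{d}$ converges as $p\to 0$, which is essentially the content of the conjecture rather than a tool for proving it.

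There is also a secondary gap in the variance step. You claim that adding or deleting one Poisson point changes $T$ by $O(p^{-1/d})$ q.s.\ because the nearest graph neighbour is at that distance. In the full graph $p=1$ this kind of local bounded-difference is available because removing a vertex $v$ lets one shortcut its two tour-neighbours $u,w$ with the edge $uw$. In $\gxp$, $u$ and $w$ need not be adjacent, and inserting a new point requires a tour edge $\{x,y\}$ with both $x,y$ adjacent to the new point; within a ball of radius $Cp^{-1/d}$ one expects only $O(C^d)$ graph-neighbours, and whether two of them are consecutive on the optimal tour is a structural property of the tour that is not controlled by a simple q.s.\ estimate. The paper handles the analogous difficulty at constant $p$ via a non-trivial continuity lemma (Lemma~\ref{sandwich}) rather than a naive bounded-differences bound, and that lemma relies on $p$ being constant. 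So even setting aside the self-similarity problem, the concentration half of your sketch needs a genuinely new argument in the $p\to 0$ regime.
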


We note that $T(\gx 1)$ is known to be remarkably well-concentrated around its mean; see, for example, the sharp deviation result of Rhee and Talagrand \cite{RT}.
\begin{q}
How concentrated is the random variable $T(\gxp)$?
\end{q}
The case of where $p=o(1)$ may be particularly interesting.

\bigskip Even for the case $p=1$ covered by the BHH theorem, the constant $\b_1^d$ $(d\geq 2)$ from Theorem \ref{gtsp} is not known.  Unlike the case of $p=1$, the 1-dimensional case is not trivial for our model.  In particular, we have proved Theorems \ref{tsp} and \ref{worst} only for $d\geq 2$.  We have ignored the case $d=1$ not because we consider the technical problems insurmountable, but because we hope that it may be possible to prove a stronger result for $d=1$, at least for the case of constant $p$.

\begin{q}\label{q3}
Determine an explicit constant $\b^1_p$ as a function of (constant) $p$ such that for $d=1$, 
\[
\lim_{n\to \infty} T(\gxp)=\b^1_pn.
\]
\end{q}

Our basic motivation has been to understand the constraint imposed on travel among random points by the restriction set of traversable edges which is chosen randomly independently of the geometry of the underlying point-set.  While the Erd\H{o}s-R\'enyi-Gilbert model is the prototypical example of a random graph, other models such as the Barab\'asi-Albert preferential attachment graph have received wide attention in recent years, due to properties (in particular, the distribution of degrees)  they share with real-world networks.  In particular, if the random graph one is traveling within is the flight-route map for an airline, the following questions may be the most relevant:
\begin{q}\label{pa}
If the preferential attachment graph is embedded randomly in the unit square (hypercube), what is the expected diameter?  What is the expected size of a minimum-length spanning tree?
\end{q}

Similarly, one could examine a combination of geometry and randomness in determining connections in the embedded graph.  Our methods already give something in this direction.  In particular, we can define $\gxpr$ as the intersection of the graphs $\gxp$ with the random geometric graph on the vertex set $\cX_n$, where a pair of points are joined by an edge if they are at distance $\leq r$.  Following our proof of Theorem \ref{tsp}, one sees that we find that

\begin{theorem}\label{geo}
If $d\geq2$, $p>0$ is constant, and $r=r(n)\geq n^{\e-1/d}$ for some $\e>0$, then 
\[
T(\gxpr)\sim \b^d_pn^{\frac{d-1} d} \qquad a.a.s.
\]
\end{theorem}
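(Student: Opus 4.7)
The lower bound is immediate. Since $\gxpr \subseteq \gxp$, any Hamilton tour in $\gxpr$ is also a Hamilton tour in $\gxp$ of the same length, so $T(\gxpr) \geq T(\gxp) \sim \b^d_p n^{\frac{d-1}{d}}$ a.s.\ by Theorem \ref{tsp}. Hence $T(\gxpr) \geq (1-o(1))\b^d_p n^{\frac{d-1}{d}}$ a.a.s.

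For the matching upper bound, the plan is to re-run the proof of Theorem \ref{tsp}, ensuring the decomposition is fine enough that every edge used in the constructed tour has Euclidean length at most $r$ and hence lies in $\gxpr$. Rescaling from $[0,1]^d$ to $[0,t]^d$ with $t = n^{1/d}$ turns the hypothesis $r \geq n^{\e - 1/d}$ into $rt \geq t^{d\e}$ in the scaled setting. The proof of Theorem \ref{tsp} (via Theorem \ref{gtsp}) decomposed $[0,t]^d$ into $m^d$ subcubes of side length $u \approx t^{\eta/2}$ for a parameter $\eta \in (0,1)$; here I would instead pick any fixed $\eta \in (0, \min(1, 2d\e)]$, so that $u \leq rt/\sqrt{d+3}$ for large $n$. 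With this choice, each subcube has diameter $\leq r$ in the original scale, and any two points lying in a pair of adjacent subcubes are at Euclidean distance $\leq r$. Consequently both the intra-subcube Hamilton cycles produced via Theorem \ref{gtsp} (which only ever use edges between points inside a single subcube) and the inter-subcube patching edges provided by Lemma \ref{paste} (which only ever use edges between adjacent subcubes) automatically satisfy the geometric constraint, and so are $\gxpr$-edges.

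The remaining probabilistic verifications are routine with this choice of $\eta$: the concentration of $\cS_\gamma$ via Hoeffding, Hamiltonicity within each subcube, and Lemma \ref{paste}'s patching failure bound $e^{-\Omega(u^d p)}$ all require only $u = t^{\eta/2} \to \infty$, which holds for any fixed $\eta > 0$. The transfer from the Poisson cloud $\gwp$ to the uniform model $\cX_{n,p}$ mimics the deduction of Theorem \ref{tsp} from Theorem \ref{gtsp} via $\Pr(|\gwp| = n) = \Omega(n^{-1/2})$; since Theorem \ref{geo} asks only for an a.a.s.\ conclusion (rather than the a.s.\ statement of Theorem \ref{tsp}) we can bypass the more delicate renewal-theorem step at the end of the proof of Theorem \ref{tsp}. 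The main obstacle is straightforward bookkeeping: confirming that the tail bounds in Lemma \ref{paste} and the Hoeffding step, all of the form $e^{-\Omega(u^c)}$ for some $c > 0$, retain enough slack when $\eta$ must be pushed close to $0$ for very small $\e$, which they do since $u = t^{\eta/2}$ still diverges polynomially in $t$ for any fixed $\eta > 0$.
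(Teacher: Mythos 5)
Your proposal is correct and is essentially the argument the paper has in mind: the paper's only comment on Theorem~\ref{geo} is ``Following our proof of Theorem~\ref{tsp}, one sees that\ldots'', and your plan---lower bound for free from $\gxpr\subseteq\gxp$, upper bound by choosing the subcube sidelength $u\approx t^{\eta/2}$ with $\eta$ small enough that $u\sqrt{d+3}\leq rt$, so that every intra-subcube edge and every Lemma~\ref{paste} patching edge is automatically an edge of $\gxpr$---is exactly how one executes that remark. The only trivial quibble is that the interval for $\eta$ should be open at $2d\e$ (i.e.~$\eta<2d\e$) to guarantee $u\sqrt{d+3}\leq rt$ when $r$ equals the threshold $n^{\e-1/d}$.
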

Of course, the ideas behind Question \ref{pa} and Theorem \ref{geo} could be considered together; note that Flaxman, Frieze and Vera \cite{FFV} considered a geometric version of a preferential attachment graph.

  The proof of Theorem \ref{t.alg} is relatively painless. We are reminded that Arora \cite{Ar} and Mitchell \cite{Mit} have described more sophisticated polynomial time algorithms that are asymptotically optimal even with the worst-case placing of the points. It would be interesting to see whether these algorithms can handle the random loss of edges.
\begin{q}
Do the methods of Arora and Mitchell allow efficiently approximation of the tour length through $\gxp$, when the embedding $\cX_n$ is \emph{arbitrary}?
\end{q}

\appendix
\section{Proof of \eqref{e.Ychernoff}}
Assume without loss of generality that we have scaled so that $\m=1$. Now $e^x\leq 1+x+x^2e^x$ when $x\geq 0$ and so for $\l>0,e^\l<1/\r$ we have
$$\E(e^{\l Y})\leq 1+\l+\l^2\brac{1+\frac{2}{(1-\r e^{\l})^3}}.$$
So, if $Z=Y_1+Y_2+\cdots+Y_n$ where $Y_1,Y_2,\ldots,Y_n$ are independent copies of $Y$,
\begin{align*}
\Pr(Z\geq n+\d n)&\leq e^{-\l(1+\d)n}\E(e^{\l Y})^n \\ &\leq e^{-\l(1+\d)n}\exp\set{\brac{\l+\l^2\brac{1+\frac{2}{(1-\r e^{\l})^3}}}n}\\ &\leq e^{-\l\d n}\exp\set{\l^2(1+2\e^{-3})}
\end{align*}
assuming that 
\beq{star}
e^\l\leq (1-\e)/\r.
\eeq
Now choose $\l=\d/(1+2\e^{-3})$ and $\e=\e(\d)$ such that \eqref{star} holds. Then
$$\Pr\brac{Z\geq n+\d n)}\leq \exp\set{-\frac{\d^2n}{2(1+2\e^{-3})}}.$$ 
\end{document}